\documentclass{amsart}

\usepackage{amsmath}
\usepackage{amsthm}
\usepackage{amssymb}

\usepackage{mathrsfs}
\usepackage[all]{xy}

\newcommand{\set}[2]{\left\{ #1 \mid #2 \right\}}

\newcommand{\adjunction}[4]{\xymatrix{ #1 \ar@<1ex>[rr]^-{#3} && #2 \ar@<1ex>[ll]^-{#4}}}
\newcommand{\HH}{\operatorname{H}}
\newcommand{\tensor}{\otimes}

\newcommand{\htensor}{\hat{\otimes}}
\newcommand{\Hom}{\operatorname{Hom}}
\newcommand{\ZZ}{\mathbb{Z}}
\newcommand{\QQ}{\mathbb{Q}}
\newcommand{\CC}{\mathbb{C}}

\newcommand{\aut}{\operatorname{aut}}

\newcommand{\Map}{\operatorname{Map}}

\newcommand{\gl}{\mathfrak{g}}
\newcommand{\hll}{\mathfrak{h}}
\newcommand{\hh}{\mathfrak{h}}
\newcommand{\MC}{\mathsf{MC}}

\newcommand{\Lie}{Lie}
\newcommand{\CP}[1]{\CC \hspace{-2.5pt} \operatorname{P}^{#1}}
\newcommand{\Tor}{\operatorname{Tor}}
\newcommand{\MCmod}{\mathscr{MC}}
\newcommand{\kk}{\Bbbk}

\newcommand{\limo}{{\varprojlim}^1}

\newtheorem{theorem}{Theorem}[section]

\newtheorem{proposition}[theorem]{Proposition}
\newtheorem{corollary}[theorem]{Corollary}
\newtheorem{lemma}[theorem]{Lemma}

\theoremstyle{definition}
\newtheorem{definition}[theorem]{Definition}
\newtheorem{remark}[theorem]{Remark}

\title[Rational homotopy theory of mapping spaces]{Rational homotopy theory of mapping spaces via Lie theory for $L_\infty$-algebras}
\author{Alexander Berglund}
\address{Department of Mathematics, Stockholm University, SE-106 91 Stockholm, Sweden}
\email{alexb@math.su.se}
\subjclass[2000]{55P62, 55U10}

\begin{document}

\begin{abstract}
We calculate the higher homotopy groups of the Deligne-Getzler $\infty$-groupoid associated to a nilpotent $L_\infty$-algebra. As an application, we present a new approach to the rational homotopy theory of mapping spaces.
\end{abstract}

\maketitle

\section{Introduction}
In \cite{Getzler} Getzler associates an $\infty$-groupoid $\gamma_\bullet(\gl)$ to a nilpotent $L_\infty$-algebra $\gl$, which generalizes the Deligne groupoid of a nilpotent differential graded Lie algebra \cite{Getzler2,Getzler,GM,Hinich}. It is well known that the set of path components $\pi_0\gamma_\bullet(\gl)$ may be identified with the `moduli space' $\MCmod(\gl)$ of equivalence classes of Maurer-Cartan elements, which plays an important role in deformation theory. Our main result is the calculation of the higher homotopy groups.

\begin{theorem} \label{thm:main1}
For a nilpotent $L_\infty$-algebra $\gl$ there is an explicitly defined natural group isomorphism
\begin{equation*} \label{eq:iso}
B\colon \HH_n(\gl^\tau) \rightarrow \pi_{n+1}(\gamma_\bullet(\gl),\tau),\quad n\geq 0,
\end{equation*}
where $\gl^\tau$ denotes the $L_\infty$-algebra $\gl$ twisted by the Maurer-Cartan element $\tau$. The group structure on $\HH_0(\gl^\tau)$ is given by the Campbell-Hausdorff formula.
\end{theorem}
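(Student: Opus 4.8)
The plan is to reduce to the untwisted basepoint, to construct $B$ by an explicit fundamental-form formula, and then to prove bijectivity by devissage along the tower defining the complete $L_\infty$-algebra $\gl$. First I would remove the twist: the shift $\alpha\mapsto\tau+\alpha$ carries Maurer--Cartan elements of $\gl^\tau\hat{\tensor}\Omega_\bullet$ to those of $\gl\hat{\tensor}\Omega_\bullet$ and induces a simplicial isomorphism $\gamma_\bullet(\gl^\tau)\cong\gamma_\bullet(\gl)$ sending $0$ to $\tau$ (the Dupont gauge condition is preserved, being affine). It identifies $\HH_n(\gl^\tau)$ on both sides and $\pi_{n+1}(\gamma_\bullet(\gl),\tau)$ with $\pi_{n+1}(\gamma_\bullet(\gl^\tau),0)$, so it suffices to produce a natural isomorphism $\HH_n(\gl)\cong\pi_{n+1}(\gamma_\bullet(\gl),0)$ for an arbitrary complete $\gl$ and to apply it to $\gl^\tau$.

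For the construction, given a cycle $z\in\gl_n$ I would set $B[z]=[z\tensor\omega]$, where $\omega\in\Omega_{n+1}$ is the fundamental (Whitney) top form of the simplex $\Delta^{n+1}$: it is closed, restricts to zero on every facet, and satisfies Getzler's gauge condition. Because $\omega$ has top degree we have $\omega\wedge\omega=0$, so every higher bracket $\ell_k(z\tensor\omega,\dots,z\tensor\omega)$ with $k\ge2$ vanishes and $d(z\tensor\omega)=dz\tensor\omega$; hence $z\tensor\omega$ is a Maurer--Cartan element of $\gl\hat{\tensor}\Omega_{n+1}$ exactly when $dz=0$, it lies in $\gamma_{n+1}(\gl)$, and all of its faces equal the basepoint, so it is a spherical simplex. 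If $z=dw$ is a boundary I would exhibit an explicit nullhomotopy built from a primitive of $\omega$, which shows that $B$ is well defined on $\HH_n(\gl)$.

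To prove $B$ is an isomorphism I would argue by devissage. In the abelian case $\gl=(V,d)$ the simplicial set $\gamma_\bullet(V)$ is, by the Dold--Kan and de Rham correspondences, a product of Eilenberg--MacLane spaces with $\pi_{n+1}\cong\HH_n(V)$ realized by $B$. A complete $\gl$ is an inverse limit of a tower of central extensions of nilpotent $L_\infty$-algebras, each nilpotent stage being a finite tower of central extensions of abelian ones; by Getzler's theorem \cite{Getzler} that $\gamma_\bullet$ sends surjections to Kan fibrations, a central extension $0\to\mathfrak a\to\mathfrak h\to\mathfrak k\to0$ with $\mathfrak a$ abelian gives a fibration sequence $\gamma_\bullet(\mathfrak a)\to\gamma_\bullet(\mathfrak h)\to\gamma_\bullet(\mathfrak k)$. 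I would check that $B$ intertwines its long exact homotopy sequence with the long exact homology sequence of $0\to\mathfrak a\to\mathfrak h\to\mathfrak k\to0$, conclude by the five lemma and induction for all nilpotent $\gl$, and then for the inverse limit match the Milnor $\limo$ sequence of the tower of fibrations with the corresponding $\limo$ sequence in homology, available since the tower is degreewise surjective, using naturality of $B$. Regarding the group laws, for $n\ge1$ both sides are abelian and $B$ is additive because the sum of spherical simplices corresponds to the sum of cycles; for $n=0$ I would identify $\pi_1(\gamma_\bullet(\gl),0)$ with the gauge group on $\gl_0$, under which concatenation of loops becomes the Campbell--Hausdorff product and the nullhomotopic loops are exactly the coboundaries, yielding the asserted structure on $\HH_0(\gl^\tau)$.

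The main obstacle I anticipate is the compatibility of the connecting homomorphisms, that is, the commutativity of the boundary square in the five-lemma ladder: identifying the topological connecting map $\pi_k(\gamma_\bullet(\mathfrak k))\to\pi_{k-1}(\gamma_\bullet(\mathfrak a))$ of the fibration with the algebraic connecting map $\HH_{k-1}(\mathfrak k)\to\HH_{k-2}(\mathfrak a)$ requires unwinding Getzler's explicit horn-fillers. The $n=0$ Campbell--Hausdorff identification and the $\limo$ bookkeeping for the completion are the other delicate points.
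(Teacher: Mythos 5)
Your plan coincides with the paper's proof in every structural respect: the same map $B[z]=[z\tensor\omega]$ with $\omega$ the fundamental form on $\Delta^{n+1}$, the same reduction of the twisted basepoint to $0$ via the translation $\sigma\mapsto\tau+\sigma$ (which does preserve the gauge condition, since the contraction $s_\bullet$ vanishes on $0$-forms), and the same devissage: abelian case by Dold--Kan, nilpotent case by the five lemma over central extensions with abelian kernel, complete case by matching Milnor $\limo$ sequences. You also correctly single out the main delicate points; note that in the paper the connecting-map square only commutes up to the sign $(-1)^n$, which is harmless for the five lemma.

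One step would fail as literally written, namely well-definedness in degree $0$. For $n\geq1$ the explicit nullhomotopy $(d+\delta)(\chi\tensor\omega_{\hat{2}}-\cdots)$ built from a primitive of $\omega$ is itself a Maurer--Cartan element and settles the matter; but for $n=0$, with $\delta\chi=\alpha-\beta$ and $\alpha,\beta\neq0$, its self-bracket equals $-[\alpha,\beta]\tensor t_2\omega_{012}\neq 0$, so it is not a $2$-simplex of $\gamma_\bullet(\gl)$. The paper must invoke Getzler's gauge-fixing lemma \cite{Getzler} to correct it to an honest $\lambda\in\gamma_2(\gl)$, at the price of acquiring an unknown third face $\partial_2\lambda$, and then run an iterative argument to show $\partial_2\lambda=\xi\tensor\omega_{01}$ with $\xi$ a $\delta$-boundary, hence trivial in $\pi_1$. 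A parallel correction is needed in the $n=0$ case of the $\limo$ comparison. These are local repairs rather than a change of strategy, but they are the genuinely nontrivial computations in the proof, and your sketch does not anticipate them.
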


As a corollary, we obtain a characterization of when an $L_\infty$-morphism induces an equivalence between the associated $\infty$-groupoids.
\begin{corollary}
An $L_\infty$-morphism between nilpotent $L_\infty$-algebras $f\colon \gl\rightarrow \hll$ induces an equivalence of $\infty$-groupoids $\gamma_\bullet(\gl)\rightarrow \gamma_\bullet(\hll)$ if and only if the following two conditions are satisfied:
\begin{enumerate}
\item The map on Maurer-Cartan moduli $\MCmod(\gl)\rightarrow\MCmod(\hll)$ is a bijection.
\item The $L_\infty$-morphism $f^\tau\colon \gl^\tau\rightarrow \hll^{f_*(\tau)}$ induces an isomorphism in homology in non-negative degrees, for every Maurer-Cartan element $\tau$ in $\gl$.
\end{enumerate}
\end{corollary}

When the ground field is $\QQ$, it follows from Theorem \ref{thm:main1} that the connected components of $\gamma_\bullet(\gl)$ are $\QQ$-local spaces, so by Sullivan's rational homotopy theory \cite{Sullivan}, their homotopy types are modeled by commutative differential graded algebras. We have the following explicit description of Sullivan models for the components.

\begin{corollary} \label{cor:main2}
Let $\gl$ be a nilpotent $L_\infty$-algebra. The component of $\gamma_\bullet(\gl)$ that contains a given Maurer-Cartan element $\tau$ is homotopy equivalent to $\gamma_\bullet(\gl_{\geq 0}^\tau)$, where $\gl_{\geq 0}^\tau$ denotes the truncation of $\gl^\tau$. Hence, provided $\gl$ is of finite type, the Chevalley-Eilenberg construction $C^*(\gl_{\geq 0}^\tau)$ is a Sullivan model for the component of $\gamma_\bullet(\gl)$ that contains $\tau$.
\end{corollary}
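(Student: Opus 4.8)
The plan is to prove Corollary~\ref{cor:main2} as a direct consequence of Theorem~\ref{thm:main1}, using the latter to identify the homotopy type of the component through $\tau$. First I would recall that twisting by a Maurer--Cartan element $\tau$ sets up a natural bijection between Maurer--Cartan elements of $\gl^\tau$ and those of $\gl$, sending the zero element of $\gl^\tau$ to $\tau$. Consequently $\tau$ becomes the basepoint corresponding to $0 \in \MC(\gl^\tau)$, and the component of $\gamma_\bullet(\gl)$ containing $\tau$ is identified with the component of $\gamma_\bullet(\gl^\tau)$ containing $0$. This reduces the problem to understanding the based component of $\gamma_\bullet(\gl^\tau)$, so without loss of generality I may replace $\gl$ by $\gl^\tau$ and take $\tau = 0$.

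Next I would exhibit the truncation map $\gl_{\geq 0}^\tau \hookrightarrow \gl^\tau$ and the induced map of $\infty$-groupoids $\gamma_\bullet(\gl_{\geq 0}^\tau) \to \gamma_\bullet(\gl^\tau)$, whose image lands in the component of the basepoint (the truncation, being non-negatively graded, supports only the trivial Maurer--Cartan element, so $\gamma_\bullet(\gl_{\geq 0}^\tau)$ is connected). The goal is then to show this map is a homotopy equivalence onto that component. The strategy is to compare homotopy groups via Theorem~\ref{thm:main1}: on both sides $\pi_{n+1}$ at the basepoint is computed as $\HH_n$ of the respective (further twisted, but now trivially twisted) $L_\infty$-algebra, and the inclusion of the truncation induces an isomorphism $\HH_n(\gl_{\geq 0}^\tau) \xrightarrow{\cong} \HH_n(\gl^\tau)$ for all $n \geq 0$, since truncating away the negatively graded part does not affect homology in non-negative degrees. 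By the naturality of the isomorphism $B$ asserted in Theorem~\ref{thm:main1}, these homology isomorphisms translate into isomorphisms on all homotopy groups $\pi_{n+1}$ based at $0$.

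Having established that $\gamma_\bullet(\gl_{\geq 0}^\tau)$ is connected and that the map induces isomorphisms on all homotopy groups based at the unique basepoint, I would invoke the Whitehead theorem for the relevant simplicial objects (both are Kan complexes, being $\infty$-groupoids) to conclude the map is a weak, hence genuine, homotopy equivalence onto the component. For the final sentence of the statement, assuming $\gl$ of finite type, I would note that $\gl_{\geq 0}^\tau$ is then a non-negatively graded complete $L_\infty$-algebra of finite type, which places us squarely in the classical situation described in the paragraph following Theorem~\ref{thm:main1}: the Chevalley--Eilenberg cochain algebra $C^*(\gl_{\geq 0}^\tau)$ is a Sullivan model whose realization recovers $\gamma_\bullet(\gl_{\geq 0}^\tau)$, and hence a Sullivan model for the component of $\gamma_\bullet(\gl)$ containing $\tau$.

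I expect the main obstacle to be verifying rigorously that the truncation induces an isomorphism on homology in all non-negative degrees \emph{and} that this is compatible with the group structures --- in particular checking the degree-zero case, where $\HH_0$ carries the Campbell--Hausdorff group structure and one must ensure the truncation map respects it. A secondary subtlety is confirming that the truncation genuinely captures the \emph{entire} component and not merely a subspace with the same homotopy groups; this is where connectedness of $\gamma_\bullet(\gl_{\geq 0}^\tau)$ together with the Whitehead-type argument must be deployed carefully, since $\gamma_\bullet(\gl^\tau)$ itself may have many components and one must isolate the one containing the basepoint.
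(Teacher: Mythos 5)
Your proposal is correct and follows essentially the same route as the paper: reduce to $\tau=0$ via the translation isomorphism $\gamma_\bullet(\gl)_\tau\cong\gamma_\bullet(\gl^\tau)_0$, observe that both sides are reduced Kan complexes so that it suffices to compare homotopy groups at the unique basepoint, and then use the naturality of $B_n$ together with the fact that $\gl_{\geq 0}^\tau\hookrightarrow\gl^\tau$ is a homology isomorphism in non-negative degrees. The final Sullivan-model statement is likewise obtained, as in the paper, from the identification of the nerve of a finite type non-negatively graded profinite $L_\infty$-algebra with the spatial realization of its Chevalley--Eilenberg algebra.
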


Furthermore, as will be important for our applications, we extend the domain of definition of the Deligne-Getzler $\infty$-groupoid to \emph{complete} $L_\infty$-algebras (by which we mean, essentially, inverse limits of towers of nilpotent $L_\infty$-algebras), and we show that our main results generalize to this setting.

We remark that the inclusion of $\gamma_\bullet(\gl)$ into the nerve $\MC_\bullet(\gl)$ is a homotopy equivalence (see \S\ref{sec:getzler}), so our results could as well have been stated for the nerve. For the proofs, however, Getzler's formulas for the horn fillers in $\gamma_\bullet(\gl)$ will play a role.

While Theorem \ref{thm:main1} and its corollaries should have independent interest, the application that originally motivated us is within the rational homotopy theory of mapping spaces. The key to this application is the following observation.
\begin{theorem} \label{thm:mainbs}
Let $X$ be a connected space, let $Y$ be a nilpotent space of finite $\QQ$-type and let $Y_\QQ$ be its $\QQ$-localization. If $A$ is a commutative differential graded algebra model for $X$ and $L$ is an $L_\infty$-algebra model for $Y$, then there is a homotopy equivalence
$$\Map(X,Y_\QQ) \simeq \gamma_\bullet(A\hat{\tensor} L),$$
where $A\hat{\tensor} L$ is the inverse limit of the nilpotent $L_\infty$-algebras $A\tensor L/L_{\geq r}$.
\end{theorem}
This appears as Theorem \ref{thm:map tensor} below. The following is a direct consequence of Theorem \ref{thm:main1}, Corollary \ref{cor:main2} and Theorem \ref{thm:mainbs}.

\begin{theorem} \label{thm:main3}
Under the hypotheses of Theorem \ref{thm:mainbs}, there is bijection
$$[X,Y_\QQ] \cong \MCmod(A\hat{\tensor} L).$$
Moreover, given a map $f\colon X\rightarrow Y_\QQ$ whose homotopy class corresponds to the equivalence class of the Maurer-Cartan element $\tau\in A\hat{\tensor} L$, there are isomorphisms
$$\pi_{n+1}(\Map(X,Y_\QQ),f) \cong \HH_n (A\hat{\tensor} L^\tau),\quad n\geq 0.$$
For $n\geq 1$ this is an isomorphism of rational vector spaces. For $n = 0$ it is an isomorphism of groups where the group structure on the right hand side is given by the Campbell-Hausdorff formula. If $A\hat{\tensor} L$ is of finite type, then the Chevalley-Eilenberg construction $C^*((A\hat{\tensor} L)_{\geq 0}^\tau)$ is a Sullivan model for the connected component of the mapping space $\Map(X,Y_\QQ)$ that contains $f$.
\end{theorem}

Rational models for mapping spaces have been studied before by many authors, see for instance \cite{BL,BPS,BS2,BFM-tran,BFM-jap,FLS,Haefliger,LS} and the recent survey \cite[\S3.1]{Smith}. The approach presented here has the advantage that one gets explicit and computable $L_\infty$-algebra models for the connected components that are expressed in terms of natural constructions on models for the source and target --- tensoring, twisting by a Maurer-Cartan element, and truncating. We have included a number of examples in the last section to illustrate this point. During the preparation of this paper, we learned that variants of Theorem \ref{thm:main3} have been obtained independently by Buijs-F\'elix-Murillo \cite{BFM-new} and Lazarev \cite[\S8]{Lazarev}, but using different methods. The main point with our approach is that Theorem \ref{thm:main3} is essentially a corollary of our general results about Deligne-Getzler $\infty$-groupoids. This suggests that Deligne-Getzler $\infty$-groupoids should have a greater role to play in rational homotopy theory.

\section{$L_\infty$-algebras} \label{sec:L-infinity}
This section contains basic definitions and facts about $L_\infty$-algebras.

\subsection*{Graded vector spaces}
We work over a ground field $\kk$ of characteristic zero. A graded vector space is a collection $V =\{V_i\}_{i\in \ZZ}$ of vector spaces over $\kk$. We say that $V$ is of \emph{finite type} if each component $V_i$ is finite dimensional. We say that it is \emph{bounded below (above)} if $V_i = 0$ for $i\ll 0$ ($i\gg0$), \emph{bounded} if it is bounded below and above, and \emph{finite dimensional} if it is bounded and of finite type. We use the convention that $V^i = V_{-i}$, and think of upper degrees as cohomological and lower degrees as homological. The graded vector space $\Hom(V,W)$ is defined by $\Hom(V,W)_i = \prod_{p+q = i} \Hom(V^p,W_q)$. The dual graded vector space is defined by $V^\vee = \Hom(V,\kk)$, where $\kk$ is viewed as a graded vector space concentrated in degree $0$. The \emph{$n$-fold suspension} $V[n]$ is defined by $V[n]_i = V_{i-n}$.

\subsection*{$L_\infty$-algebras}
An $L_\infty$-algebra is a graded vector space $\gl$ together with maps
$$\ell_r\colon \gl^{\tensor r} \rightarrow \gl,\quad x_1\tensor \ldots \tensor x_r \mapsto [x_1,\ldots,x_r],$$
of degree $r-2$, for all $r\geq 1$, satisfying the following axioms:
\begin{itemize}
\item (Anti-symmetry) $[\ldots,x,y,\ldots] = -(-1)^{|x||y|} [\ldots,y,x,\ldots]$.
\item (Generalized Jacobi identities) For every $n\geq 1$ and all $x_1,\ldots, x_n\in \gl$,
$$\sum_{p=1}^n \sum_{\sigma} (-1)^{\epsilon} [[x_{\sigma_1},\ldots,x_{\sigma_p}],x_{\sigma_{p+1}},\ldots,x_{\sigma_n}] = 0,$$
where the inner sum is over all permutations $\sigma$ of $\{1,\ldots,n\}$ such that $\sigma_1<\ldots < \sigma_p$ and $\sigma_{p+1}<\ldots < \sigma_n$. The sign is given by
$$\epsilon = p + \sum_{\substack{i<j, \\ \sigma_i>\sigma_j}} (|x_i||x_j|+1).$$
\end{itemize}
We will write $\delta(x) = [x]$. For $n=1,2,3$, the generalized Jacobi identities are equivalent to
\begin{align*}
\delta^2(x) & = 0, \\
\delta[x,y] & = [\delta x,y] + (-1)^{|x|}[x,\delta y], \\
[x,[y,z]] - [[x,y],z] - (-1)^{|x||y|}[y,[x,z]] & = \pm(\delta \ell_3 + \ell_3\delta)(x\tensor y\tensor z).
\end{align*}
In particular, $\gl$ has an underlying chain complex,
$$\xymatrix{\cdots \ar[r] & \gl_1 \ar[r]^{\delta_1} & \gl_0 \ar[r]^{\delta_0} & \gl_{-1} \ar[r] & \cdots}$$
The binary bracket $\ell_2$ satisfies the Jacobi identity up to homotopy, the ternary bracket $\ell_3$ being a contracting homotopy. For this reason an $L_\infty$-algebra may be thought of as a Lie algebra `up to homotopy'.

An $L_\infty$-algebra concentrated in degree $0$ is the same thing as an ordinary Lie algebra, because in this case $\ell_r = 0$ for $r\ne 2$ for degree reasons, and the generalized Jacobi identities reduce to the classical Jacobi identity. An $L_\infty$-algebra $\gl$ is called \emph{abelian} if $\ell_r = 0$ for $r\geq 2$. An abelian $L_\infty$-algebra is the same thing as a chain complex $(\gl,\delta)$. An $L_\infty$-algebra is called \emph{minimal} if its differential $\delta = \ell_1$ is zero. An $L_\infty$-algebra with $\ell_r = 0$ for $r\geq 3$ is the same thing as a differential graded Lie algebra $(\gl,\delta,[,])$.

\subsection*{The Chevalley-Eilenberg construction}
There is an alternative and more compact definition of $L_\infty$-algebras. An $L_\infty$-structure on a graded vector space $\gl$ is the same thing as a coderivation differential $d$ on the symmetric coalgebra $\Lambda_c \gl[1]$. The relation between $d$ and the brackets is given by the following: write $d = d_1+d_2 +\ldots$, where $d_r$ is the component of $d$ that decreases word-length by $r-1$. Each $d_r$ is a coderivation and is therefore determined by its restriction $d_r\colon \Lambda_c^r \gl[1] \rightarrow \gl[1]$. The formula
$$d_r(sx_1\ldots sx_r) =  \epsilon s[x_1,\ldots,x_r],$$
where $\epsilon = |x_{r-1}| + |x_{r-3}| + \cdots$, defines  $d$ in terms of the higher brackets and vice versa. The condition $d^2 = 0$ is equivalent to the generalized Jacobi identities for $\gl$. The \emph{bar construction} of an $L_\infty$-algebra $\gl$ is defined to be the cocommutative differential graded coalgebra
$$\mathscr{C}_*(\gl) = (\Lambda_c \gl[1],d).$$
A convenient feature of the compact definition of an $L_\infty$-algebra is that an \emph{$L_\infty$-morphism} $f\colon \gl \to \hh$ may be defined simply as a morphism of cocommutative differential graded coalgebras $f\colon \mathscr{C}_*(\gl) \to \mathscr{C}_*(\hh)$. When written out, an $L_\infty$-morphism $f\colon \gl\to \hh$ corresponds to a collection of maps,
$$f_n\colon \gl^{\tensor n} \to \hh,\quad n\geq 1,$$
of degree $n-1$ satisfying certain compatibility conditions. In particular, $f_1\colon \gl\to \hh$ is a morphism of chain complexes. We say that $f$ is a quasi-isomorphism if $f_1$ is. If $\gl$ is a differential graded Lie algebra, then $\mathscr{C}_*(\gl)$ coincides with the classical construction due to Quillen \cite[Appendix B]{Quillen}. The \emph{Chevalley-Eilenberg construction} $C^*(\gl)$ is by definition the dual commutative differential graded algebra $C^*(\gl) = \mathscr{C}_*(\gl)^\vee$. If $L$ is of finite type and non-negatively graded, then $C^*(L)$ may be identified with a commutative differential graded algebra (cdga) of the form $(\Lambda V,d)$, where $\Lambda V$ denotes the free graded commutative algebra on the graded vector space $V = L[1]^\vee$. In fact, the Chevalley-Eilenberg construction gives a one-to-one correspondence
$$
\left\{ \parbox{4.2cm}{\center Finite type $L_\infty$-algebras $L$ with $L=L_{\geq 0}$ \endcenter} \right\} \longleftrightarrow \left\{ \parbox{4cm}{\center Finite type cdgas $(\Lambda V,d)$ with $V = V^{\geq 1}$ \endcenter} \right\}.
$$

\subsection*{Nilpotent $L_\infty$-algebras}
Let $\gl$ be an $L_\infty$-algebra. We define a \emph{compatible filtration} to be a descending filtration, $\gl = F^1\gl \supseteq F^2 \gl \supseteq \cdots$, such that
\begin{equation}
[F^{i_1}\gl,\ldots, F^{i_r}\gl] \subseteq F^{i_1+\cdots + i_r} \gl
\end{equation}
for all $r\geq 1$ and all $i_1,\ldots,i_r$. Every $\gl$ admits at least one compatible filtration, for instance the constant one with $F^i\gl = \gl$ for all $i$. Moreover, the level-wise intersection of any family of compatible filtrations is again compatible. It follows that there exists a smallest compatible filtration, namely the intersection of all compatible filtrations. We denote this by
$$\gl = \Gamma^1 \gl \supseteq \Gamma^2 \gl \supseteq \cdots$$
and call it the \emph{lower central series} of $\gl$. Concretely, $\Gamma^k\gl$ is spanned by all possible bracket expressions one can form using at least $k$ elements from $\gl$. If $\gl$ is an ordinary Lie algebra, then $\Gamma^k\gl$ is the ordinary lower central series.

\begin{definition}
Let $\gl$ be an $L_\infty$-algebra. We say that $\gl$ is
\begin{enumerate}
\item \emph{nilpotent} if $\Gamma^k \gl = 0$ for some $k$,
\item \emph{degree-wise nilpotent} if for every $n$ there is a $k$ such that $(\Gamma^k\gl)_n = 0$.
\end{enumerate}
In other words, $\gl$ is (degree-wise) nilpotent if the lower central series eventually terminates at $0$ (degree-wise).
\end{definition}

Evidently, every nilpotent $L_\infty$-algebra is degree-wise nilpotent. The converse is not true. In fact, every positively graded $L_\infty$-algebra is automatically degree-wise nilpotent, but of course not nilpotent in general. The reader may check that a non-negatively graded Lie algebra $L$ is degree-wise nilpotent if and only if $L_0$ is a nilpotent Lie algebra and $L_n$ is a nilpotent $L_0$-module for every $n$. In Theorem \ref{thm:sullivan} we will see that non-negatively graded degree-wise nilpotent $L_\infty$-algebras of finite type correspond exactly to Sullivan models for nilpotent topological spaces of finite $\QQ$-type. Thus, the notion of degree-wise nilpotence is closely related to the notion of nilpotence for topological spaces. 

\begin{remark}
We warn the reader that the `lower central series' defined in \cite{Getzler} is not a filtration: it is defined by $F^1\gl = \gl$ and, for $i >1$,
$$F^i \gl = \sum_{i_1+\cdots + i_k=i} [F^{i_1}\gl,\ldots,F^{i_k}\gl].$$
For instance, if $\gl$ has trivial binary bracket but non-trivial ternary bracket, then $F^2\gl \not\supseteq F^3\gl$. However, the definition of nilpotence given in \cite{Getzler} (stating that $\gl$ is nilpotent if $F^i \gl = 0$ for all $i\gg 0$) makes sense and agrees with our definition.
\end{remark}

\subsection*{Degree-wise nilpotent $L_\infty$-algebras and Sullivan algebras}
Recall \cite[\S12]{FHT-RHT} that a \emph{Sullivan algebra} is a cdga of the form $(\Lambda V,d)$, where $V$ is a graded vector space that is concentrated in positive cohomological degrees and that admits a filtration,
$$0 = V(-1) \subseteq V(0)\subseteq V(1) \subseteq \ldots \subseteq V,\quad V = \bigcup V(k),$$
such that $dV(k) \subseteq \Lambda V(k-1)$ for all $k$. It is called \emph{minimal} if the differential is decomposable in the sense that $d(V)\subseteq \Lambda^+ V \cdot \Lambda^+ V$.

\begin{theorem} \label{thm:sullivan}
The Chevalley-Eilenberg construction gives a bijection

$$
\left\{ \parbox{5cm}{\center Degree-wise nilpotent, finite type $L_\infty$-algebras $L$ with $L=L_{\geq 0}$ \endcenter} \right\} \longleftrightarrow \left\{ \parbox{4.4cm}{\center Finite type Sullivan algebras $(\Lambda V,d)$ with $V = V^{\geq 1}$ \endcenter} \right\}.
$$
Minimal $L_\infty$-algebras correspond to minimal Sullivan algebras.
\end{theorem}

\begin{proof}
We may identify $C^*(L)=(\Lambda V,d)$ where $V = L[1]^\vee$. Clearly, $V = V^{\geq 1}$ if and only if $L = L_{\geq 0}$. Suppose given a filtration $V(k)$ of $V$ that exhibits $(\Lambda V,d)$ as a Sullivan algebra. Since $V$ is of finite type, we may without loss of generality assume that each $V(k)$ is finite dimensional (if this is not the case, then we can work with the filtration $W(k) = V(0)^{\leq k} + V(1)^{\leq k-2} + V(2)^{\leq k-4} + \ldots$ instead.)
If we let $F^kL \subseteq L = V[1]^\vee$ be the annihilator of $V(k)[1]$, then we get a decreasing filtration $L = F^{-1}L \supseteq F^0L \supseteq F^1 L \supseteq \ldots$. The condition $dV(k+1) \subseteq \Lambda V(k)$ translates into the condition
\begin{equation} \label{eq:complete}
\ell_n(F^kL,L,\ldots,L)\subseteq F^{k+1}L,\quad   \mbox{for all $k$ and all $n\geq 1$.}
\end{equation}
In particular, $F^k L$ is an $L_\infty$-ideal. Each quotient $L/F^kL$ is finite dimensional because $V(k)$ is assumed to be finite dimensional. This implies that $(L/F^kL)_{>p} =0$ for some $p$. Since $L = L_{\geq 0}$, it follows that all brackets of arity $>p+2$ vanish in $L/F^kL$. This together with \eqref{eq:complete} implies that the $L_\infty$-algebra $L/F^kL$ is nilpotent. In other words, for every $k$ there is an $r_k$ such that $\Gamma^{r_k}L \subseteq F^kL$. Since $V$ is the union of all $V(k)$ and $V$ is of finite type, we have that for every $p$ there is a $k$ such that $V(k)[1]^p = V[1]^p$, or equivalently $(F^k L)_p = 0$. Hence, $(\Gamma^{r_k}L)_p = 0$ for this $k$, which shows that $L$ is degree-wise nilpotent. 

Conversely, assume that $L$ is degree-wise nilpotent. Define a descending filtration $F^kL$ by setting
$$F^{2i-1}L = \Gamma^i L,\quad F^{2i}L = \Gamma^i L \cap d^{-1}(\Gamma^{i+1}L).$$
Then \eqref{eq:complete} is satisfied. (Note that \eqref{eq:complete} could be violated for $n=1$ if one were to take $F^kL = \Gamma^k L$.) If we define $V(k) \subseteq V = L[1]^\vee$ to be the annihilator of $F^kL[1]$, then the condition \eqref{eq:complete} makes sure that $dV(k) \subseteq \Lambda V(k-1)$ for all $k$. The filtration $V(k)$ exhausts $V$ because $L$ is degree-wise nilpotent.
\end{proof}

\subsection*{Maurer-Cartan elements}
If $\gl$ is a degree-wise nilpotent $L_\infty$-algebra the following is a finite sum for every $\tau\in\gl_{-1}$:
\begin{equation} \label{eq:mc}
\mathcal{F}(\tau) = \sum_{k\geq 1} \frac{1}{k!} [\tau^{\wedge k}].
\end{equation}
Here we write $[\tau^{\wedge k}]$ for $[\tau,\ldots,\tau]$ ($k$ copies of $\tau$). If $\mathcal{F}(\tau) = 0$ then $\tau$ is called a \emph{Maurer-Cartan element}, and we let $\MC(\gl)$ denote the set of all such elements. Given $\tau\in\MC(\gl)$ the formula
$$[\alpha_1,\ldots,\alpha_r]_\tau = \sum_{k \geq 0} \frac{1}{k!} [\tau^{\wedge k},\alpha_1,\ldots,\alpha_r],\quad r\geq 1,$$
defines a new $L_\infty$-algebra structure on the underlying graded vector space of $\gl$ \cite[Proposition 4.4]{Getzler}. Denote this $L_\infty$-algebra by $\gl^\tau$.

\subsection*{Extension of scalars}
Given a commutative differential graded algebra (cdga) $A$ and an $L_\infty$-algebra $\gl$, we can extend scalars and form a new $L_\infty$-algebra $A \tensor \gl$ with differential and brackets defined by
$$\delta(x\tensor \alpha) = d_A(x)\tensor \alpha + (-1)^{|x|} x\tensor d_{\gl}(\alpha),$$
$$[x_1\tensor \alpha_1,\ldots,x_r\tensor \alpha_r] = (-1)^{\sum_{i<j} |\alpha_i||x_j|} x_1\ldots x_r\tensor [\alpha_1,\ldots,\alpha_r],\quad r\geq 2.$$
Similarly, we may endow $\gl \tensor A$ with an $L_\infty$-algebra structure.

\section{Getzler's $\infty$-groupoid $\gamma_\bullet(\gl)$} \label{sec:getzler}
In this section we will review some basic properties of the nerve $\MC_\bullet(\gl)$ and the Deligne-Getzler $\infty$-groupoid $\gamma_\bullet(\gl)$ associated to a nilpotent $L_\infty$-algebra $\gl$.

Let $\Omega_\bullet$ denote the simplicial cdga with $n$-simplices
$$\Omega_n = \frac{\kk[t_0,\ldots,t_n]\tensor \Lambda(dt_0,\ldots,dt_n)}{(\sum_i t_i -1, \sum_i dt_i)},\quad |t_i|=0, |dt_i| = 1.$$
For a non-decreasing map $\varphi\colon [n] \rightarrow [m]$, the morphism of cdgas $\varphi^*\colon \Omega_m\rightarrow \Omega_n$ is determined by the formula $\varphi^*(t_i) = \sum_{j\in \varphi^{-1}(i)} t_j$. Note that $\Omega_n$ is not finite dimensional, but it is bounded. Indeed, $\Omega_n^i = 0$ unless $0\leq i\leq n$.

\begin{definition} (\cite{Getzler,Hinich})
The \emph{nerve} of a nilpotent $L_\infty$-algebra $\gl$ is the simplicial set
$$\MC_\bullet(\gl) = \MC(\gl \tensor \Omega_\bullet).$$
\end{definition}

Getzler defines $\gamma_\bullet(\gl)$ as a certain subcomplex of $\MC_\bullet(\gl)$. To state the definition we need to introduce some notation. For each $n$, the \emph{elementary differential forms},
$$\omega_{i_0,\ldots,i_k} = k!\sum_{j=0}^k (-1)^j t_{i_j} dt_{i_0}\ldots \widehat{dt_{i_j}} \ldots dt_{i_k}\in \Omega_n^k,\quad 0\leq i_0, \ldots , i_k\leq n,$$
span a finite dimensional subcomplex $C_n\subseteq \Omega_n$. The cochain complex $C_n$ is isomorphic to the normalized cochain complex of the standard $n$-simplex. These assemble into an inclusion of simplicial cochain complexes $C_\bullet \subseteq \Omega_\bullet$. By \cite{Dupont,Getzler}, there is a projection $P_\bullet\colon \Omega_\bullet\rightarrow C_\bullet$ and a `gauge' $s_\bullet\colon \Omega_\bullet^* \rightarrow \Omega_\bullet^{*-1}$ such that
$$1-P_\bullet = ds_\bullet +s_\bullet d,\quad s_\bullet^2 = 0.$$
\begin{definition}[Getzler \cite{Getzler}]
Let $\gl$ be a nilpotent $L_\infty$-algebra. The simplicial set $\gamma_\bullet(\gl)$ is defined by
$$\gamma_\bullet(\gl)  = \set{\alpha\in \MC_\bullet(\gl)}{s_\bullet \alpha = 0} \subseteq \MC_\bullet(\gl).$$
\end{definition}

According to \cite[Corollary 5.11]{Getzler}, the inclusion of $\gamma_\bullet(\gl)$ into $\MC_\bullet(\gl)$ is a homotopy equivalence, so these simplicial sets are indistinguishable from the point of view of homotopy theory. However, $\gamma_\bullet(\gl)$ has some remarkable additional features. In Lie theory one associates a group $G$ to a nilpotent Lie algebra $\gl$ by way of the \emph{Campbell-Hausdorff formula}: as sets $G=\gl$, and the multiplication is given by
$$\alpha \cdot \beta = \log(e^\alpha e^\beta)  = \alpha + \beta + \frac{1}{2}[\alpha,\beta] + \frac{1}{12}[\alpha,[\alpha,\beta]] + \frac{1}{12}[[\alpha,\beta],\beta] + \cdots,$$
see, e.g., \cite{Serre}. It turns out that $\gamma_\bullet(\gl)$ is isomorphic to the nerve of the group $G$ in this situation. For general nilpotent $L_\infty$-algebras, $\gamma_\bullet(\gl)$ is an $\infty$-groupoid, in particular a Kan complex, and Getzler's iterative formulas for the horn fillers may be interpreted as generalized Campbell-Hausdorff formulas. The construction may also be viewed as a non-abelian version of the Dold-Kan construction (see \cite[Proposition 5.1]{Getzler}).

\section{The homotopy groups of $\gamma_\bullet(\gl)$} \label{sec:gamma}
This section contains the proof of Theorem \ref{thm:main1} and is the core of the paper. For $\gl$ a nilpotent $L_\infty$-algebra, we will define the natural map
$$B_n^\tau\colon \HH_n(\gl^\tau)\rightarrow \pi_{n+1}(\gamma_\bullet(\gl),\tau),\quad n\geq 0,$$
and prove that it is an isomorphism. We first deal with the case $\tau = 0$. Then in Proposition \ref{prop:stranslate} we show how to reduce everything to this case. Finally, we explain how to extend the results to degree-wise nilpotent $L_\infty$-algebras.

As mentioned in the previous section, $\gamma_\bullet(\gl)$ is a Kan complex. Recall that elements $[\alpha]$ of the $n$th homotopy group $\pi_n(X,v)$ of a Kan complex $X$, based at a vertex $v\in X_0$, are represented by $n$-simplices $\alpha\in X_n$ such that $\partial_i \alpha = v$ for all $i$ (where we interpret $v$ as a degenerate $(n-1)$-simplex). Two $n$-simplices $\alpha$ and $\beta$ represent the same homotopy class if and only if there is an $(n+1)$-simplex $\omega$ such that $\partial\omega = (v,\ldots,v,\alpha,\beta)$.  Here we use the notation $\partial\omega = (\partial_0\omega,\ldots,\partial_{n+1}\omega)$. For $n\geq 1$, the group structure on $\pi_n(X,v)$ is given by $[\alpha]\cdot [\beta] = [\partial_n \omega]$, where $\omega$ is any $(n+1)$-simplex with boundary $\partial\omega = (v,\ldots,v,\alpha,-,\beta)$, the existence of which is guaranteed by the Kan condition. More generally, for every $(n+1)$-simplex $\omega$ such that $\partial_j\partial_i \omega = v$ for all $i,j$, we have the homotopy addition theorem, which says that
$$[\partial_0 \omega] - [\partial_1 \omega] + [\partial_2\omega] - \cdots + (-1)^{n+1} [\partial_{n+1} \omega] \in \pi_n(X,v),\quad n\geq 2,$$
$$[\partial_0 \omega] [\partial_2 \omega] = [\partial_1 \omega] \in\pi_1(X,v),\quad n=1,$$
see \cite[Theorem III.3.13]{GJ}.

\subsection*{Definition and fundamental properties of the map $B_n$}
To simplify notation, let $\pi_n(\gl) = \pi_n(\gamma_\bullet(\gl),0)$ for $n\geq 1$ and $\pi_0(\gl) = \pi_0(\gamma_\bullet(\gl))$. Let
$$\omega_{i_0,\ldots,i_k}^n = k!\sum_{j=0}^k (-1)^j t_{i_j} dt_{i_0}\ldots \widehat{dt_{i_j}} \ldots dt_{i_k}\in \Omega_n^k.$$
To simplify notation further we will write
\begin{align*}
\omega^n & = \omega_{0\ldots n}^n = n!dt_1\ldots dt_n, \\
\omega_{\widehat{r}}^n & = \omega_{0\ldots \widehat{r}\ldots n}^n,
\end{align*}
and we will drop the superscript when it is clear from the context. We note that the simplicial faces of $\omega_{\widehat{r}}$ are given by $\partial_i(\omega_{\widehat{r}}) = \omega$ if $i = r$ and zero otherwise. Furthermore, with respect to the de Rham differential, we have that $d\omega_{\widehat{r}} = (-1)^r \omega$.

\begin{definition}
Let $\gl$ be a nilpotent $L_\infty$-algebra. Define
$$B_n\colon \HH_n(\gl)\rightarrow \pi_{n+1}(\gl),\quad B_n [\alpha] = [\alpha\tensor \omega^{n+1}].$$
\end{definition}

\begin{proposition}
The map $B_n\colon \HH_n(\gl)\rightarrow \pi_{n+1}(\gl)$ is well-defined for all $n\geq 0$.
\end{proposition}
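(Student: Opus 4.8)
The plan is to show that $\alpha\tensor\omega^{n+1}$ is a genuine spheroid — a Maurer--Cartan element lying in $\gamma_{n+1}(\gl)$ whose entire simplicial boundary is the basepoint $0$ — and then that its class in $\pi_{n+1}(\gl)$ is unchanged when $\alpha$ is altered by a boundary. Throughout I will exploit three elementary properties of the top forms: $\omega^m = m!\,dt_1\cdots dt_m$ is closed and $d\omega_{\widehat r}^m = (-1)^r\omega^m$; any product of two of the forms $\omega^m,\omega_{\widehat r}^m$ vanishes (their cohomological degrees are $m$ and $m-1$, so a product either exceeds the top degree $m$ of $\Omega_m$ or is the square of an odd-degree form); and all of these forms are elementary, hence lie in $C_\bullet$, on which Getzler's contraction satisfies $s_\bullet = 0$.

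First I would verify the Maurer--Cartan equation $\mathcal{F}(\alpha\tensor\omega^{n+1}) = 0$. In $\Omega_{n+1}\hat\tensor\gl$ the element $\alpha\tensor\omega^{n+1}$ sits in degree $n - (n+1) = -1$, as required. The linear term is $\delta(\alpha\tensor\omega^{n+1}) = \delta\alpha\tensor\omega^{n+1} \pm \alpha\tensor d\omega^{n+1}$, which vanishes because $\alpha$ is a cycle and $\omega^{n+1}$ is closed. Every higher term $\tfrac1{k!}[(\alpha\tensor\omega^{n+1})^{\wedge k}]$ with $k\geq2$ carries the factor $(\omega^{n+1})^{\wedge k}=0$, so $\mathcal{F}$ reduces to its linear part and vanishes. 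Since $\omega^{n+1}$ is elementary, $s_\bullet(\alpha\tensor\omega^{n+1}) = \pm\alpha\tensor s_\bullet\omega^{n+1}=0$, so the element lies in $\gamma_{n+1}(\gl)$. Finally, for every $i$ the face $\partial_i(\alpha\tensor\omega^{n+1}) = \alpha\tensor\partial_i\omega^{n+1}$ lands in $\Omega_n^{n+1}=0$, so the boundary is the constant simplex $0$. Hence $\alpha\tensor\omega^{n+1}$ represents a class in $\pi_{n+1}(\gl)$.

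The crux is independence of the choice of representative: given $\alpha - \alpha' = \delta\beta$ with $\beta\in\gl_{n+1}$, I would produce an $(n+2)$-simplex of $\gamma_\bullet(\gl)$ realizing a homotopy between the spheroids $\alpha\tensor\omega^{n+1}$ and $\alpha'\tensor\omega^{n+1}$. The candidate is a combination
\[
H = \alpha\tensor\omega_{\widehat s}^{n+2} \pm \alpha'\tensor\omega_{\widehat{s'}}^{n+2} \pm \beta\tensor\omega^{n+2}, \qquad s\neq s',
\]
with signs and indices $s,s'$ chosen so that the linear terms cancel. Indeed, using $\delta\alpha=\delta\alpha'=0$, $d\omega_{\widehat r}^{n+2}=(-1)^r\omega^{n+2}$ and $\delta\beta=\alpha-\alpha'$, the term $\delta H$ is a combination of $\alpha\tensor\omega^{n+2}$ and $\alpha'\tensor\omega^{n+2}$ that can be made to vanish by calibrating the signs against $s,s'$; all bracket terms drop out because every product of two of the forms $\omega_{\widehat s}^{n+2},\omega_{\widehat{s'}}^{n+2},\omega^{n+2}$ is zero. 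Thus $H$ is a Maurer--Cartan element, and it lies in $\gamma_{n+2}(\gl)$ because all three forms are elementary. Reading off faces via $\partial_i\omega_{\widehat r}^{n+2}=\omega^{n+1}$ for $i=r$ and $0$ otherwise, together with $\partial_i\omega^{n+2}=0$, one finds that $H$ has exactly two nonzero faces, $\partial_s H = \alpha\tensor\omega^{n+1}$ and $\partial_{s'}H = \alpha'\tensor\omega^{n+1}$, with all remaining faces equal to $0$. This is precisely a homotopy of spheroids, so $[\alpha\tensor\omega^{n+1}] = [\alpha'\tensor\omega^{n+1}]$ and $B_n$ is well-defined. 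Taking $\alpha'=0$ recovers, in particular, that boundaries map to the trivial class.

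I expect the main obstacle to be the sign and coefficient bookkeeping in the last step: one must choose the positions $s,s'$ and the three signs so that the Maurer--Cartan equation holds on the nose and, simultaneously, so that the resulting two-faced $(n+2)$-simplex encodes the equality $[\alpha\tensor\omega^{n+1}]=[\alpha'\tensor\omega^{n+1}]$ rather than a sign-twisted variant --- this requires matching the simplicial identities governing which pair of faces of an $(n+2)$-simplex witnesses a homotopy of spheroids. A secondary point to pin down is the precise normalization of Getzler's contraction, namely that $s_\bullet$ annihilates every elementary form, which is what places all the constructed simplices in $\gamma_\bullet(\gl)$ rather than merely in $\MC_\bullet(\gl)$.
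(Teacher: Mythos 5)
Your verification that $\alpha\tensor\omega^{n+1}$ is a Maurer--Cartan element lying in $\gamma_{n+1}(\gl)$ with constant boundary $0$ is correct, and your homotopy $H$ is essentially the paper's $\lambda_0$ (taken there with the two nontrivial faces at positions $0$ and $1$, and written as $(d+\delta)$ applied to an explicit primitive so that $(d+\delta)\lambda_0=0$ is automatic). For $n\geq 1$, and also when one of $\alpha,\alpha'$ is zero, your argument goes through exactly as you describe.

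The gap is the remaining case $n=0$. Your blanket claim that any product of two of the forms $\omega^m,\omega^m_{\widehat r}$ vanishes fails for $m=n+2=2$: the forms $\omega^2_{\widehat s}$ and $\omega^2_{\widehat{s'}}$ have cohomological degree $1$ in $\Omega_2$, whose top degree is $2$, so for $s\neq s'$ their product is a generically nonzero $2$-form (and it is not the square of an odd-degree form, so that argument does not apply either). Consequently $\tfrac12[H,H]=\pm[\alpha,\alpha']\tensor\omega_{\widehat s}\omega_{\widehat{s'}}$, which need not vanish, and $H$ fails the Maurer--Cartan equation whenever $[\alpha,\alpha']\neq 0$; the paper computes this obstruction explicitly as $-[\alpha,\beta]\tensor t_2\omega_{012}$. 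This is exactly where the paper's proof spends most of its effort: it invokes Getzler's Lemma 5.3 to correct $\lambda_0$ iteratively to an honest $\lambda\in\gamma_2(\gl)$ with the same faces at positions $0$ and $1$, at the price of acquiring a possibly nonzero third face $\partial_2\lambda$. One must then show, by tracking through the iteration that $\lambda-\lambda_0$ remains a $\delta$-boundary concentrated in the bidegrees $(Z_0(\gl)\tensor\Omega_2^1)\oplus(\gl_1\tensor\Omega_2^2)$, that $\partial_2\lambda=\xi\tensor\omega_{01}$ for a $\delta$-boundary $\xi$, whence $[\partial_2\lambda]=0$ by the already-established special case. None of this appears in your proposal, and the clean two-nonzero-faces picture breaks down once the correction is made. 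Your two secondary worries are genuinely minor by comparison: the paper takes $s=0$, $s'=1$, and the vanishing of $s_\bullet$ on elementary forms is one of the standard side conditions of Dupont's contraction.
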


\begin{proof}
First of all, $\alpha\tensor \omega$ is a Maurer-Cartan element: Since $\delta \alpha = 0$ and $d\omega = 0$ we have $(d+\delta)(\alpha\tensor \omega) = 0$. Since $\omega \omega = 0$ we have $[(\alpha\tensor \omega)^{\wedge \ell}] = 0$ for all $\ell \geq 2$.

Suppose that $[\alpha] = [\beta] \in \HH_n(\gl)$, say $\delta \chi = \alpha -\beta$ where $\chi \in \gl_{n+1}$. Then consider the degree $-1$ element
\begin{align*}
\lambda_0 & : = (d+\delta)(\chi\tensor \omega_{\hat{2}} - (-1)^n \alpha\tensor \omega_{\hat{0}\hat{2}} - (-1)^n \beta \omega_{\hat{1}\hat{2}})) \\
& = \alpha \tensor \omega_{\hat{0}} +\beta\tensor \omega_{\hat{1}} - (-1)^n \chi \tensor \omega.
\end{align*}
The simplicial boundary of $\lambda_0$ is
\begin{equation} \label{eq:boundary}
\partial \lambda_0 = (\alpha\tensor \omega,\beta\tensor \omega,0,\ldots,0).
\end{equation}
It is obvious that $(d+\delta)\lambda_0 = 0$ and that $[\lambda_0^{\wedge \ell}] = 0$ for all $\ell\geq 3$. If $n>0$ or one of $\alpha$ or $\beta$ is zero, then also $[\lambda_0,\lambda_0] = 0$, so that $\lambda_0\in\gamma_{n+2}(\gl)$ in these cases. Then \eqref{eq:boundary} shows that $[\alpha\tensor \omega] = [\beta\tensor \omega] \in \pi_{n+1}(\gl)$. In the remaining case $n=0$ and $\alpha,\beta \ne 0$, we have that
\begin{equation} \label{eq:lambda_0}
[\lambda_0,\lambda_0] = -[\alpha,\beta]\tensor t_2\omega_{012},
\end{equation}
so $\lambda_0$ is not necessarily a Maurer-Cartan element in this case. However, using \cite[Lemma 5.3]{Getzler} we can find an element $\lambda \in \gamma_2(\gl)$ with the property that $\epsilon^2 \lambda = 0$, $P_2 R^2 \lambda = \lambda_0$ and
\begin{equation} \label{eq:b2}
\partial \lambda = (\alpha\tensor \omega,\beta\tensor \omega, \partial_2 \lambda).
\end{equation}
We will argue that $[\partial_2 \lambda] = 0\in \pi_1(\gl)$. Then \eqref{eq:b2} shows that $[\alpha\tensor \omega] = [\beta\tensor \omega]\in\pi_1(\gl)$.

The element $\lambda$ is obtained as a limit $\lambda = \lim_{k\rightarrow \infty} \lambda_k$ where the elements $\lambda_k$ are defined iteratively by
\begin{equation} \label{eq:iteration}
\lambda_{k+1} = \lambda_0 - \sum_{\ell\geq 2} \frac{1}{\ell !} F [\lambda_k^{\wedge \ell}],\quad k\geq 0
\end{equation}
where $F = P_2h_2^2 + s_2 \colon \Omega_2^* \rightarrow \Omega_2^{*-1}$.

Observe that since $[\alpha,\beta] = \delta [\chi,\beta]$, \eqref{eq:lambda_0} shows that $[\lambda_0,\lambda_0]$ is a $\delta$-boundary. Moreover, by definition $\lambda_0\in (Z_0(\gl)\tensor\Omega_2^1)\oplus (\gl_1\tensor \Omega_2^2)$. Assume by induction that
\begin{itemize}
\item $[\lambda_k,\lambda_k]$ and $\lambda_k-\lambda_0$ are $\delta$-boundaries.
\item $\lambda_k \in (Z_0(\gl)\tensor\Omega_2^1)\oplus (\gl_1\tensor \Omega_2^2)$.
\end{itemize}
Then it follows that the same is true for $\lambda_{k+1}$. Indeed, for degree reasons $[\lambda_k^{\wedge \ell}] = 0$ for $\ell\geq 3$, so the iterative formula \eqref{eq:iteration} reduces to
$$\lambda_{k+1} = \lambda_0 -\frac{1}{2}F[\lambda_k,\lambda_k].$$
This implies that $\lambda_{k+1} \in (Z_0(\gl)\tensor\Omega_2^1)\oplus (\gl_1\tensor \Omega_2^2)$ and that $\lambda_{k+1}-\lambda_0$ is a $\delta$-boundary. The identity
$$[\lambda_{k+1},\lambda_{k+1}] = [\lambda_0,\lambda_0] + [\lambda_{k+1}-\lambda_0,\lambda_{k+1}+\lambda_0]$$
together with the facts that $\lambda_{k+1}-\lambda_0$ and $[\lambda_0,\lambda_0]$ are $\delta$-boundaries and that $\lambda_k+\lambda_0\in (Z_0(\gl)\tensor\Omega_2^1)\oplus (\gl_1\tensor \Omega_2^2)$ imply that $[\lambda_{k+1},\lambda_{k+1}]$ is a $\delta$-boundary. This finishes the inductive step.

It follows that $\lambda-\lambda_0$ is a $\delta$-boundary. Since $\partial_2 \lambda_0 = 0$ this implies that $\partial_2 \lambda$ is a $\delta$-boundary. But since $\partial_2\lambda\in \gamma_1(\gl)$ and $\partial_2\lambda \in Z_0(\gl)\tensor \Omega_1^1$ this implies that $\partial_2\lambda = \xi\tensor \omega_{01}$ for some $\delta$-boundary $\xi \in \gl_0$. It follows from the first part of the proof that $[\partial_2\lambda] = 0 \in \pi_1(\gl)$.
\end{proof}

\begin{proposition} \label{prop:homo}
The map $B_n\colon \HH_n(\gl)\rightarrow \pi_{n+1}(\gl)$ is a homomorphism of abelian groups for $n\geq 1$.
\end{proposition}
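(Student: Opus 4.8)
The plan is to show that $B_n[\alpha + \beta] = B_n[\alpha] \cdot B_n[\beta]$, where the product on the right is the group operation in $\pi_{n+1}(\gl)$. Since $n \geq 1$, both the source $\HH_n(\gl)$ and the target $\pi_{n+1}(\gl)$ are abelian groups (the target because $\pi_{n+1}$ is abelian for $n+1 \geq 2$), so it suffices to produce a single $(n+2)$-simplex whose boundary realizes the sum relation. Concretely, recall that in a Kan complex the group operation in $\pi_{n+1}$ based at the degenerate vertex can be read off from a horn-filling: to say that $[\gamma] = [\gamma'] + [\gamma'']$ one exhibits an $(n+2)$-simplex $\Lambda$ whose faces are degenerate except for three of them, carrying $\gamma$, $\gamma'$, $\gamma''$ in the appropriate positions. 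So the goal is to build such a $\Lambda \in \gamma_{n+2}(\gl)$ with $\partial_i \Lambda$ recording $\alpha \tensor \omega^{n+1}$, $\beta \tensor \omega^{n+1}$, and $(\alpha+\beta) \tensor \omega^{n+1}$.

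The natural candidate, mirroring the well-definedness proof, is to write down an explicit element built from the elementary forms on $\Delta[n+2]$. First I would form a degree $-1$ element of $\gl \hat\tensor \Omega_{n+2}$ as a $(d+\delta)$-primitive of a suitable combination of $\alpha \tensor \omega_{\hat r}$ and $\beta \tensor \omega_{\hat s}$ terms, chosen so that applying $(d+\delta)$ produces exactly the three faces $\alpha \tensor \omega$, $\beta \tensor \omega$, $(\alpha + \beta) \tensor \omega$ with the correct signs and zeros elsewhere. Because $\alpha$ and $\beta$ are cycles ($\delta \alpha = \delta \beta = 0$) and the relevant forms are closed or have controlled de Rham differentials ($d\omega_{\hat r} = (-1)^r \omega$), the computation $(d+\delta)$ of this primitive reduces to bookkeeping of boundary faces. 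The key structural simplifications are that $\omega \omega = 0$ kills all brackets of arity $\geq 2$ among the leading $\omega$-terms, and that, crucially, $n \geq 1$ forces the binary bracket $[\,\cdot\,,\cdot\,]$ of these elements to vanish for degree reasons — exactly the obstruction $[\alpha,\beta] \tensor t\omega$ that appeared in the $n=0$ case of the previous proof. This is why the homomorphism statement is clean only for $n \geq 1$.

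The main obstacle I anticipate is twofold. First, writing the primitive so that its simplicial boundary isolates the three desired faces and makes all other faces degenerate (hence null-homotopic in $\pi_{n+1}$) requires a careful choice of which $\omega_{i_0 \ldots i_k}$ to use; the indexing combinatorics on $\Delta[n+2]$ are delicate, and one must verify the face identities $\partial_i \omega_{\hat r} = \omega$ for $i = r$ and $0$ otherwise propagate correctly through the $(d+\delta)$-primitive. Second, the resulting element lies a priori in $\MC_\bullet(\gl)$, not in $\gamma_\bullet(\gl)$; I would need to ensure $s_\bullet \Lambda = 0$, or else invoke the gauge/retraction machinery of \cite[Lemma 5.3]{Getzler} as in the preceding proof to replace $\Lambda$ by a genuine element of $\gamma_{n+2}(\gl)$ without disturbing the relevant faces. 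Here the degree constraint $n \geq 1$ again helps, since it guarantees the Maurer-Cartan equation for the candidate holds on the nose (all binary and higher brackets vanish), so no iterative correction is needed and $\Lambda$ is honestly an element of $\gamma_{n+2}(\gl)$.

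Finally, once $\Lambda$ is in hand, reading off its boundary via the standard Kan-complex description of the group law in $\pi_{n+1}$ yields $[\alpha \tensor \omega] + [\beta \tensor \omega] = [(\alpha+\beta)\tensor\omega]$, which is precisely $B_n[\alpha] + B_n[\beta] = B_n[\alpha+\beta]$. I would also remark that naturality in $\gl$ is immediate from the functoriality of the construction $\alpha \mapsto \alpha \tensor \omega^{n+1}$, so no separate argument is required for that clause.
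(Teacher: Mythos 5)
Your proposal is correct and is essentially the paper's own argument: the paper simply writes down the explicit element $\lambda = \alpha\tensor\omega_{\hat{0}} + (\alpha+\beta)\tensor\omega_{\hat{1}} + \beta\tensor\omega_{\hat{2}} \in \gamma_{n+2}(\gl)$, whose only nonzero faces are $\partial_0\lambda = \alpha\tensor\omega$, $\partial_1\lambda = (\alpha+\beta)\tensor\omega$, $\partial_2\lambda = \beta\tensor\omega$, and which is a genuine Maurer--Cartan element precisely because $n\geq 1$ kills all brackets for degree reasons (and it lies in $\gl\tensor C_{n+2}$, so $s_\bullet\lambda=0$ and no appeal to Getzler's Lemma 5.3 is needed), exactly as you anticipate. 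The only cosmetic difference is that the paper exhibits $\lambda$ directly rather than as the image under $d+\delta$ of a primitive, which sidesteps the degree bookkeeping you describe.
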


\begin{proof}
Let $\alpha,\beta\in \gl_n$ be two cycles. We need to show that
$$[(\alpha+\beta)\tensor \omega] = [\alpha\tensor \omega] + [\beta\tensor\omega]\in \pi_{n+1}(\gl).$$
This means that we have to find an element $\lambda\in\gamma_{n+2}(\gl)$ with simplicial boundary
$$\partial \lambda = (\alpha\tensor \omega,(\alpha+\beta)\tensor \omega,\beta\tensor \omega,0\ldots,0).$$
We claim that
\begin{align*}
\lambda & = \alpha\tensor\omega_{\hat{0}} + (\alpha + \beta)\tensor \omega_{\hat{1}} + \beta \tensor \omega_{\hat{2}}
\end{align*}
satisfies the requirements. Indeed, $\lambda$ has the correct simplicial boundary. One calculates that $(d+\delta)\lambda =0$, and since $n\geq 1$ we have that $[\lambda^{\wedge \ell}] = 0$ for $\ell \geq 2$ for degree reasons. Therefore $\lambda \in \gamma_{n+2}(\gl)$.
\end{proof}

If $\gl$ is a nilpotent $L_\infty$-algebra, then the zeroth homology $\HH_0(\gl)$ is a nilpotent Lie algebra, and it can be given a group structure via the Campbell-Hausdorff formula.

\begin{proposition} \label{prop:ch}
The map $B_0\colon \HH_0(\gl) \rightarrow \pi_1(\gl)$ is a group homomorphism when $\HH_0(\gl)$ is given the Campbell-Hausdorff group structure.
\end{proposition}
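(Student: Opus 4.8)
The plan is to compute the multiplication in $\pi_1(\gl)=\pi_1(\gamma_\bullet(\gl),0)$ by horn filling and to recognise the result as the Campbell--Hausdorff product. Writing $\omega=\omega^1=dt_1$, the loops $\alpha\tensor\omega$ and $\beta\tensor\omega$ represent $B_0[\alpha]$ and $B_0[\beta]$, and, following the simplicial convention of Proposition \ref{prop:homo}, their product in $\pi_1(\gl)$ is the class of the middle face $\partial_1\lambda$ of any $\lambda\in\gamma_2(\gl)$ with $\partial_0\lambda=\beta\tensor\omega$ and $\partial_2\lambda=\alpha\tensor\omega$. It therefore suffices to exhibit such a $\lambda$ whose middle face is a cycle representing the Campbell--Hausdorff product $[\alpha]\cdot[\beta]\in\HH_0(\gl)$.

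Mimicking the proof that $B_0$ is well defined, I would start from the naive ansatz
$$\lambda_0=\alpha\tensor\omega_{\hat 2}+\gamma\tensor\omega_{\hat 1}+\beta\tensor\omega_{\hat 0},\qquad \gamma=\alpha\cdot\beta,$$
where $\gamma\in Z_0(\gl)$ is a cycle representing the Campbell--Hausdorff product, computed with the binary bracket. This $\lambda_0$ has the prescribed simplicial boundary, and one computes $(d+\delta)\lambda_0=(\alpha+\beta-\gamma)\tensor\omega$. In contrast to the cases $n\ge 1$, the quadratic term $\tfrac12[\lambda_0,\lambda_0]$ does not vanish, so $\lambda_0$ is not a Maurer--Cartan element; I would correct it by the same gauge-fixing iteration as before, setting $\lambda=\lim_k\lambda_k$ with $\lambda_{k+1}=\lambda_0-\sum_{\ell\ge 2}\tfrac{1}{\ell!}F[\lambda_k^{\wedge\ell}]$ and $F=P_2h_2^2+s_2$. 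Two features make this manageable: all $L_\infty$-brackets other than the binary bracket of two form-degree-one components vanish on $\Delta^2$, since any surviving bracket would have form-degree at least $3$ and $\Omega_2^{\ge 3}=0$; and, exactly as in the well-definedness argument, the successive corrections are $\delta$-exact, so they alter the simplicial boundary of $\lambda_0$ only by terms $\xi\tensor\omega_{01}$ with $\xi$ a $\delta$-boundary, which do not change the $\pi_1$-classes of the faces. Completeness of $\gl$ gives convergence and $s_2\lambda=0$ by construction, so $\lambda\in\gamma_2(\gl)$.

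The substance of the proof, and the step I expect to be the main obstacle, is to show that with the choice $\gamma=\alpha\cdot\beta$ the iteration actually converges to a genuine Maurer--Cartan element, equivalently that the accumulated curvature of $\lambda_0$ is absorbed by the $\delta$-exact corrections. This is where the Campbell--Hausdorff formula is forced. Conceptually, $\lambda$ is a flat $\gl$-connection on $\Delta^2$ whose parallel transports along the three edges are $\exp(\alpha)$, $\exp(\beta)$ and $\exp(\gamma)$; the Maurer--Cartan equation is the infinitesimal non-abelian Stokes theorem, asserting that transport around $\partial\Delta^2$ is trivial, i.e.\ $\exp(\alpha)\exp(\beta)=\exp(\gamma)$. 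Unlike the abelian and higher cases the brackets here genuinely persist, so $\lambda$ cannot be written in closed form; instead one must track the convergent series and recognise that the constant part of the obstruction is precisely the Baker--Campbell--Hausdorff tail $\gamma-\alpha-\beta=\tfrac12[\alpha,\beta]+\tfrac1{12}[\alpha,[\alpha,\beta]]+\cdots$, so that the choice $\gamma=\alpha\cdot\beta$ makes it vanish modulo $\delta$-boundaries. Since $B_0$ depends only on homology classes, identifying the middle face up to a $\delta$-boundary is all that is required.

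With $\lambda\in\gamma_2(\gl)$ in hand, the Kan group law gives $[\partial_1\lambda]=B_0[\alpha]\cdot B_0[\beta]$, while $\partial_1\lambda$ represents $B_0([\alpha]\cdot[\beta])$, proving that $B_0$ is a homomorphism for the Campbell--Hausdorff structure. The remaining verifications --- convergence, the degree count showing only $\ell_2$ survives, and $\delta$-exactness of the corrections --- are routine reuses of the well-definedness proof, so the genuinely delicate point is the identification of $\partial_1\lambda$ with the Campbell--Hausdorff product.
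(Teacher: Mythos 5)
Your overall strategy --- filling a horn on $\Delta[2]$ whose outer faces are $\alpha\tensor\omega$ and $\beta\tensor\omega$ and identifying the remaining face with the Campbell--Hausdorff product --- is the same as the paper's, but the execution contains one outright error and one unfilled gap at the decisive step. The error is the claim that, ``exactly as in the well-definedness argument, the successive corrections are $\delta$-exact.'' In the well-definedness proof that exactness is not automatic: it comes from the hypothesis $\alpha-\beta=\delta\chi$, which gives $[\alpha,\beta]=\delta[\chi,\beta]$ and hence makes $[\lambda_0,\lambda_0]$, and inductively every correction term, a $\delta$-boundary. Here $\alpha,\beta\in\gl_0$ are arbitrary cycles and $[\alpha,\beta]$ need not be exact; take $\gl$ a nonabelian nilpotent Lie algebra concentrated in degree $0$ with $\delta=0$, where nothing nonzero is a boundary yet the corrections are nonzero (they are precisely the higher Campbell--Hausdorff terms). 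Worse, if your claim were true the same argument run with an \emph{arbitrary} cycle $\gamma$ in the middle face would show $[\partial_1\lambda]=[\gamma\tensor\omega]$ for every $\gamma$, which is absurd since the group law determines the middle face up to homotopy.

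The gap is that you correctly isolate the real content --- that the obstruction to extending your $\lambda_0$ (which is not even $(d+\delta)$-closed, since $(d+\delta)\lambda_0=(\alpha+\beta-\gamma)\tensor\omega$) to a Maurer--Cartan element vanishes exactly when $\gamma=\alpha\cdot\beta$ --- but you support it only with the parallel-transport heuristic, explicitly deferring ``the main obstacle.'' That identification is the whole theorem, so the proof as written assumes its conclusion. The paper proceeds differently precisely to avoid redoing this computation: it starts from the closed element $\lambda_0=(d+\delta)(\alpha\tensor\omega_2-\beta\tensor\omega_0)$, whose middle face is the naive sum $(\alpha+\beta)\tensor\omega$, invokes Getzler's Lemma 5.3 to produce the canonical filler $\lambda\in\gamma_2(\gl)$ with the two outer faces preserved, notes for degree reasons that $\partial_1\lambda=\xi\tensor\omega_{01}$ with $\xi=I_{01}(\partial_1\lambda)=I_{02}(\lambda)$, and then quotes Getzler's explicit evaluation of $I_{02}(\lambda)$ as the Campbell--Hausdorff series (Getzler, p.~296). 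To complete your version you would have to either carry out that iteration and integration explicitly or cite Getzler's computation as the paper does.
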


\begin{proof}
Given cycles $\alpha,\beta\in\gl_0$, the product of $[\alpha\tensor \omega]$ and $[\beta \tensor \omega]$ in $\pi_1(\gl)$ is represented by $\partial_1\lambda$¸ where $\lambda \in \gamma_2(\gl)$ has simplicial boundary
\begin{equation} \label{eq:b}
\partial \lambda = (\alpha \tensor \omega,\partial_1\lambda,\beta \tensor \omega).
\end{equation}
As in the proof of Proposition \ref{prop:homo}, consider the element
\begin{align*}
\lambda_0 & = (d+\delta)(\alpha\tensor \omega_2 - \beta \tensor \omega_0) \\
& = \alpha\tensor \omega_{12} + (\alpha + \beta)\tensor \omega_{02} + \beta\tensor \omega_{01}.
\end{align*}
It is not necessarily a Maurer-Cartan element, but by \cite[Lemma 5.3]{Getzler}, there is a unique element $\lambda \in \gamma_2(\gl)$ such that $(d+\delta)h^1 \lambda = \lambda_0$. For degree reasons, the recursive formula \cite[(5-20)]{Getzler} defining $\lambda = \lim_{k\rightarrow \infty} \lambda_k$ simplifies to
$$\lambda_{k+1} = \lambda_0 - \frac{1}{2} (P_2 h_2^1 + s_2)[\lambda_k,\lambda_k].$$
It follows that $\lambda \in \gl_0 \tensor \Omega_2^1$, whence $\partial_1 \lambda \in \gl_0 \tensor \Omega_1^1$. Since we also have $\partial_1\lambda \in \gamma_1(\gl)$, this element must be of the form
$$\partial_1 \lambda = \xi \tensor \omega_{01}$$
for some $\xi \in \gl_0$. The coefficient is determined by $\xi = I_{01}(\partial_1 \lambda) = I_{02}(\lambda)$. One finds that $I_{02}(\lambda)$ is given by the Campbell-Hausdorff formula, cf. \cite[p. 296]{Getzler}.
\end{proof}

\subsection*{Fibration sequences}
Let $0\rightarrow \gl' \stackrel{\mu}{\rightarrow} \gl \stackrel{\epsilon}{\rightarrow} \gl''\rightarrow 0$ be a short exact sequence of nilpotent $L_\infty$-algebras. By \cite[Theorem 5.10]{Getzler} there is an associated fibration sequence
$$\gamma_\bullet(\gl')\rightarrow \gamma_\bullet(\gl)\rightarrow \gamma_\bullet(\gl''),$$
whence a long exact sequence of homotopy groups
$$\cdots \rightarrow \pi_{n+2}(\gl'') \stackrel{\partial^\pi}{\rightarrow} \pi_{n+1}(\gl') \rightarrow \pi_{n+1}(\gl) \rightarrow \pi_{n+1}(\gl'') \rightarrow \cdots.$$
On the other hand, the short exact sequence also induces a long exact sequence in homology
$$\cdots\rightarrow \HH_{n+1}(\gl'') \stackrel{\partial^H}{\rightarrow} \HH_n(\gl')\rightarrow \HH_n(\gl)\rightarrow \HH_n(\gl'') \rightarrow \cdots.$$
\begin{proposition} \label{prop:connecting}
Let $0\rightarrow \gl' \stackrel{\mu}{\rightarrow} \gl \stackrel{\epsilon}{\rightarrow} \gl'' \rightarrow 0$ be a short exact sequence of nilpotent $L_\infty$-algebras. The diagram
\begin{equation} \label{eq:connecting}
\xymatrix{\HH_n(\gl'') \ar[d]^-{(-1)^nB_n} \ar[r]^-{\partial^H} & \HH_{n-1}(\gl') \ar[d]^-{B_{n-1}} \\ \pi_{n+1}(\gl'') \ar[r]^-{\partial^\pi} & \pi_n(\gl')}
\end{equation}
commutes for all $n\geq 1$. If $\gl'$ is abelian, then it commutes for $n=0$ as well.
\end{proposition}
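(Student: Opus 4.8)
The plan is to realise the connecting homomorphism $\partial^\pi$ by an explicit lift and then compare the two composites directly on a cocycle. Fix a class $[\bar\alpha]\in\HH_n(\gl'')$ represented by a cycle $\bar\alpha\in\gl''_n$, and identify $\gl'$ with $\ker\epsilon$ so that $\mu$ is the inclusion. Choose a lift $\alpha\in\gl_n$ with $\epsilon(\alpha)=\bar\alpha$; then $\delta\alpha\in\gl'_{n-1}$ is a cycle, and by the usual description of the homology connecting map $\partial^H[\bar\alpha]=[\delta\alpha]$. By definition $B_n[\bar\alpha]$ is represented by the $(n+1)$-simplex $\bar\alpha\tensor\omega^{n+1}\in\gamma_{n+1}(\gl'')$, all of whose simplicial faces vanish, so it is a legitimate representative of an element of $\pi_{n+1}(\gl'')$.

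The heart of the argument is the element
$$\Lambda = \alpha\tensor\omega^{n+1} + (-1)^n\,\delta\alpha\tensor\omega_{\hat 0}^{n+1}\in\Omega_{n+1}\hat\tensor\gl.$$
I would first note that $\Lambda$ lifts the chosen representative, $\epsilon(\Lambda)=\bar\alpha\tensor\omega^{n+1}$, since $\epsilon(\delta\alpha)=0$. Using $d\omega^{n+1}=0$, $d\omega_{\hat 0}^{n+1}=\omega^{n+1}$ and $\delta^2\alpha=0$, a one-line computation gives $(d+\delta)\Lambda=\bigl(1+(-1)^n(-1)^{n-1}\bigr)\delta\alpha\tensor\omega^{n+1}=0$, which is precisely why the coefficient $(-1)^n$ is forced. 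For $n\geq1$ the higher brackets $[\Lambda^{\wedge\ell}]$, $\ell\geq2$, vanish for degree reasons: the form-factors $\omega^{n+1}\in\Omega_{n+1}^{n+1}$ and $\omega_{\hat 0}^{n+1}\in\Omega_{n+1}^{n}$ have pairwise products of cohomological degree at least $2n$, which exceeds $n+1$ when $n\geq2$, while for $n=1$ the only borderline product $\omega_{\hat 0}^{2}\omega_{\hat 0}^{2}$ vanishes because $\omega_{\hat 0}^{2}$ is an odd form. As $\Lambda$ is built from elementary forms it is annihilated by Getzler's contraction $s$, so $\Lambda\in\gamma_{n+1}(\gl)$. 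Finally, from $\partial_i\omega^{n+1}=0$ for all $i$ and $\partial_i\omega_{\hat 0}^{n+1}=\omega^{n}$ for $i=0$ and $0$ otherwise, one reads off $\partial_0\Lambda=(-1)^n\delta\alpha\tensor\omega^{n}$ and $\partial_i\Lambda=0$ for $i>0$.

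Thus $\Lambda$ is exactly the lift demanded by the standard recipe for the connecting homomorphism of the Kan fibration of Proposition \ref{prop:fibration}: its projection to $\gamma_\bullet(\gl'')$ is the chosen representative of $B_n[\bar\alpha]$, all faces but $\partial_0$ are at the basepoint, and $\partial_0\Lambda$ lies in the fiber $\gamma_\bullet(\gl')$. Hence
$$\partial^\pi\bigl(B_n[\bar\alpha]\bigr)=[\partial_0\Lambda]=(-1)^n[\delta\alpha\tensor\omega^{n}]=(-1)^n B_{n-1}[\delta\alpha]=(-1)^n B_{n-1}\bigl(\partial^H[\bar\alpha]\bigr).$$
Since $\partial^\pi$ is a homomorphism and $(-1)^n(-1)^n=1$, this rearranges to $\partial^\pi\circ\bigl((-1)^nB_n\bigr)=B_{n-1}\circ\partial^H$, which is the commutativity of \eqref{eq:connecting} for $n\geq1$; the sign $(-1)^n$ sits exactly where the statement places it, on the left vertical map.

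The main obstacle is the case $n=0$, and this is where the hypothesis that $\gl'$ be abelian is used, for two reasons. First, the right-hand map $B_{-1}\colon\HH_{-1}(\gl')\to\pi_0(\gl')$ is only available via the Dold--Kan identification $\gamma_\bullet(\gl')\cong\Gamma_\bullet(\gl'[1])$ of Proposition \ref{prop:abelian}. Second, the degree argument collapses: for $n=0$ the factor $\omega_{\hat 0}^{1}$ is a $0$-form, so $[\Lambda,\Lambda]$ acquires the terms $[\delta\alpha,\delta\alpha]\tensor(\omega_{\hat 0}^{1})^2$ and $[\alpha,\delta\alpha]\tensor\omega^{1}\omega_{\hat 0}^{1}$, and $\Lambda$ need no longer be Maurer--Cartan. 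Assuming $\gl'$ abelian kills the first term, and I would then correct $\Lambda$ to a genuine element $\lambda\in\gamma_1(\gl)$ by the iteration of \cite[Lemma 5.3]{Getzler}. The crucial point, verified by an induction of the same shape as in the proof that $B_n$ is well defined, is that every correction stays inside the abelian ideal $\gl'$ (one uses $[\gl,\gl']\subseteq\gl'$ together with the vanishing of brackets internal to $\gl'$). Consequently $\lambda$ differs from $\Lambda$ only by a $\gl'$-valued term, its face $\partial_0\lambda$ differs from $\delta\alpha\tensor\omega^{0}$ by a boundary in the abelian complex $\gl'$, and therefore represents the same class in $\pi_0(\gl')\cong\HH_{-1}(\gl')$. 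This gives commutativity for $n=0$ as well.
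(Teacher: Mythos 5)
For $n\geq 1$ your argument is correct and is essentially the paper's: the paper uses the lift $\lambda=(d+\delta)(\alpha\tensor\omega_{\hat 0})=\delta\alpha\tensor\omega_{\hat 0}+(-1)^n\alpha\tensor\omega^{n+1}$, which is $(-1)^n$ times your $\Lambda$, checks the Maurer--Cartan condition by the same degree count (including the borderline case $n=1$), and reads off $\partial_0\lambda$; your relocation of the sign and the rearrangement via additivity of $\partial^\pi$ are harmless.

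The genuine gap is in your $n=0$ case, at the step ``$\lambda$ differs from $\Lambda$ only by a $\gl'$-valued term, \emph{consequently} $\partial_0\lambda$ differs from $\delta\alpha$ by a boundary in $\gl'$.'' That implication does not follow: a $\gl'$-valued correction need not have $\delta$-exact faces, and for a merely abelian (non-central) ideal it genuinely does not. Carrying out the iteration --- equivalently, solving the Maurer--Cartan equation on $\Delta[1]$ in the gauge $s_1\lambda=0$, which forces the $1$-form component to be the constant $\alpha\tensor dt_1$ --- gives
\[
\partial_0\lambda \;=\; \delta\alpha \pm \tfrac12[\alpha,\delta\alpha]+\tfrac16[\alpha,[\alpha,\delta\alpha]]\pm\cdots,
\]
the gauge transform of $0$ by $\alpha$, and the tail need not be a $\delta$-boundary. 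Concretely, take the nilpotent dgl with $\gl_0=\langle\alpha\rangle$, $\gl_{-1}=\langle\beta,\gamma\rangle$, $\delta\alpha=\beta$, $[\alpha,\beta]=\gamma$, and $\gl'=\langle\beta,\gamma\rangle$: this is an abelian, non-central ideal, $\pi_0(\gamma_\bullet(\gl'))=\gl'_{-1}$ with no identifications, $B_{-1}\partial^H[\bar\alpha]=[\beta]$, but $\partial^\pi B_0[\bar\alpha]=[\beta\pm\tfrac12\gamma]$; moreover $\partial^\pi$ is visibly not additive here, so no sign convention rescues commutativity. So your argument cannot be completed under the stated hypothesis; what makes the $n=0$ square commute is \emph{centrality} of $\gl'$ (which is all the paper ever uses, since the composition series of a complete $L_\infty$-algebra consists of central extensions), and in that case the difficulty disappears at the source: every obstruction term $[\delta\alpha,\delta\alpha]$, $[\alpha,\delta\alpha]$, $[\alpha,(\delta\alpha)^{\wedge \ell-1}]$ has an argument in $\gl'$ and vanishes, so your $\Lambda$ is already a Maurer--Cartan element and no correction is needed. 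Two smaller points: your list of obstructions for $n=0$ omits the higher brackets $[\Lambda^{\wedge\ell}]$, $\ell\geq 3$, whose terms $[\alpha,(\delta\alpha)^{\wedge\ell-1}]\tensor\omega^1(\omega^1_{\hat 0})^{\ell-1}$ survive both the form-degree count and abelianness of $\gl'$; and, for what it is worth, the paper's own proof verifies the Maurer--Cartan condition only for $n\geq 1$ and is silent on the $n=0$ case, so you are not missing a trick that the paper supplies.
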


\begin{proof}
For a cycle $\alpha''\in\gl_n''$ the class $\partial^H[\alpha'']$ is represented by any cycle $\beta'\in \gl_{n-1}'$ such that
$$\xymatrix{& \alpha \ar@{|->}[d]^\delta \ar@{|->}[r]^{\epsilon_n} & \alpha'' \ar@{|->}[d]^\delta \\  \beta' \ar@{|->}[r]^{\mu_{n-1}} & \beta \ar@{|->}[r]^{\epsilon_{n-1}} & 0 }$$
If we chase $[\alpha'']\in \HH_n(\gl'')$ around the diagram \eqref{eq:connecting}, we get the following picture:
$$\xymatrix{[\alpha''] \ar@{|->}[d]^-{(-1)^n B_n} \ar@{|->}[r]^-{\partial^H} & [\beta'] \ar@{|->}[d]^-{B_{n-1}} \\ (-1)^n[\alpha''\tensor \omega^{n+1}] \ar@{|-->}[r]_-{\partial^\pi}^-? & [\beta'\tensor \omega^{n}]}$$
To compute $\partial^{\pi}(-1)^n[\alpha''\tensor \omega^{n+1}]$ we need to find a lift $\lambda$ in the following diagram
$$\xymatrix{\Lambda^0[n+1] \ar@{^{(}->}[d] \ar[rr]^-{(-,0,\ldots,0)} && \gamma_\bullet(\gl) \ar[d] \\ \Delta[n+1] \ar[rr]_-{(-1)^n \alpha''\tensor \omega^{n+1}} \ar@{-->}[urr]^-\lambda && \gamma_\bullet(\gl'')}$$
Then $\partial_0\lambda$ is an element in $\gamma_n(\gl')$ that represents $\partial^\pi(-1)^n[\alpha''\tensor \omega^{n+1}]$, cf. \cite[Ch. I.7]{GJ}. We claim that
$$\lambda := (d+\delta)(\alpha\tensor \omega_{\widehat{0}}) = \beta\tensor \omega_{\widehat{0}} + (-1)^n\alpha\tensor \omega^{n+1}$$
can be chosen as a lift. Indeed, $\lambda$ is a Maurer-Cartan element because $(d+\delta)\lambda = 0$ and $[\lambda^{\wedge \ell}] = 0$ for $\ell\geq 2$ and $n\geq 1$ for degree reasons. Furthermore, we clearly have $\epsilon(\lambda) = (-1)^n \alpha''\tensor \omega^{n+1}$ and $\partial_i\lambda = 0$ for $0<i\leq n+1$. It follows that $\partial^\pi (-1)^n[\alpha''\tensor \omega^{n+1}]$ is represented by $\partial_0\lambda = \beta\tensor \omega^n$. Thus, $\partial^\pi B_n = (-1)^n B_n \partial^H$.
\end{proof}

\begin{theorem} \label{thm:main nilpotent}
Let $\gl$ be a nilpotent $L_\infty$-algebra. The map
$$B_n\colon \HH_n(\gl) \rightarrow \pi_{n+1}(\gamma_\bullet (\gl),0)$$
is an isomorphism of groups for all $n\geq 0$, where $\HH_0(\gl)$ is given the Campbell-Hausdorff group structure.
\end{theorem}

\begin{proof}
For abelian $\gl$ this follows from the fact that $\gamma_\bullet(\gl)$ is isomorphic to the Dold-Kan construction on the suspended chain complex $\gl[1]$ \cite[Proposition 5.1]{Getzler}. For nilpotent $\gl$ the claim follows from the five lemma by repeated application of Proposition \ref{prop:connecting} to the exact sequences
$$0\rightarrow \Gamma^k\gl/\Gamma^{k+1}\gl \rightarrow \gl/\Gamma^{k+1}\gl \rightarrow \gl/\Gamma^k \gl\rightarrow 0.$$
Note that in order for the five lemma to go through for $n=0$, it is crucial for $B_0$ to be a homomorphism with respect to a group structure where $0$ is the identity element. It is also important that $\Gamma^k\gl/\Gamma^{k+1}\gl$ is abelian so that $B_{-1}$ is defined.
\end{proof}

\begin{remark}
The verifications of the basic properties of the map $B_n$ are surprisingly technical. The referee has suggested a possible alternative approach, whereby one observes that there is a natural quasi-isomorphism of abelian $L_\infty$-algebras,
$$\beta_n\colon \gl[-n-1] \to \mathbf{\Omega}^{n+1}\gl,\quad \alpha\mapsto \alpha\tensor \omega^{n+1},$$
where $\mathbf{\Omega}^{n+1}\gl$ consists of all $\chi\in \gl\tensor \Omega_{n+1}$ with simplicial boundary $\partial \chi = 0$. The referee suggests that there should exist a weak equivalence
$$\upsilon\colon \MC_\bullet(\mathbf{\Omega}^{n+1} \gl) \sim \mathbf{\Omega}^{n+1}\MC_\bullet(\gl)$$
such that the map $B_n$ factors as
$$
\xymatrix{
\HH_n(\gl) \cong \pi_0 \MC_\bullet(\gl[-n-1]) \ar[r]_-{(\beta_n)_*}^-\cong & \pi_0 \MC_\bullet(\mathbf{\Omega}^{n+1}\gl) \ar[r]_-{\upsilon_*}^-\cong & \pi_0 \mathbf{\Omega}^{n+1} \MC_\bullet(\gl) \cong \pi_{n+1}(\gl).
}
$$
We invite the interested reader to investigate this alternative approach.
\end{remark}

\subsection*{The components of $\gamma_\bullet(\gl)$}
So far, we have only been concerned with the homotopy groups of $\gamma_\bullet(\gl)$ at the base-point $0$. The argument given above can be adapted to work for an arbitrary base-point $\tau\in \MC(\gl)$. Alternatively, the following will reduce everything to the base-point $0$. Recall the definition of the twisted $L_\infty$-algebra $\gl^\tau$ from \S\ref{sec:L-infinity}.

\begin{lemma} \label{lemma:translate}
Let $\gl$ be a nilpotent $L_\infty$-algebra and let $\tau\in\MC(\gl)$. Then
$$\MC(\gl^\tau) = \set{\sigma\in\gl_{-1}}{\sigma+\tau \in \MC(\gl)}.$$
\end{lemma}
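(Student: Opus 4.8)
The plan is to unwind the definition of the twisted brackets and recognize the Maurer--Cartan equation for $\gl^\tau$ as a reindexed version of the equation for $\gl$. Recall that the Maurer--Cartan element $\sigma \in (\gl^\tau)_{-1} = \gl_{-1}$ is characterized by $\mathcal{F}^\tau(\sigma) = \sum_{r\geq 1} \frac{1}{r!}[\sigma^{\wedge r}]_\tau = 0$, where the twisted brackets are $[\sigma^{\wedge r}]_\tau = \sum_{k\geq 0}\frac{1}{k!}[\tau^{\wedge k},\sigma^{\wedge r}]$. Substituting this into $\mathcal{F}^\tau(\sigma)$ gives a double sum
\begin{equation*}
\mathcal{F}^\tau(\sigma) = \sum_{r\geq 1}\sum_{k\geq 0} \frac{1}{r!\,k!}[\tau^{\wedge k},\sigma^{\wedge r}],
\end{equation*}
and the idea is to compare this with $\mathcal{F}(\tau+\sigma) = \sum_{n\geq 1}\frac{1}{n!}[(\tau+\sigma)^{\wedge n}]$.

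First I would expand $\mathcal{F}(\tau+\sigma)$ using (anti)symmetry of the brackets. Since $\tau$ and $\sigma$ both sit in degree $-1$ and the bracket $\ell_n$ is graded symmetric in the appropriate sense, expanding $[(\tau+\sigma)^{\wedge n}]$ produces the binomial-type expansion $\sum_{k+r=n}\binom{n}{k}[\tau^{\wedge k},\sigma^{\wedge r}]$, so that
\begin{equation*}
\mathcal{F}(\tau+\sigma) = \sum_{n\geq 1}\frac{1}{n!}\sum_{k+r=n}\binom{n}{k}[\tau^{\wedge k},\sigma^{\wedge r}] = \sum_{k\geq 0}\sum_{r\geq 0}\frac{1}{k!\,r!}[\tau^{\wedge k},\sigma^{\wedge r}].
\end{equation*}
Here I must be careful that the sign conventions in the definition of $\ell_n$ and of $A\tensor\gl$ do indeed make the elementary symmetric expansion valid for elements of the same odd degree; this is the one place where a sign check is genuinely needed, though it is routine given the anti-symmetry axiom.

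Comparing the two displays, the sum for $\mathcal{F}(\tau+\sigma)$ differs from $\mathcal{F}^\tau(\sigma)$ only by the terms with $r=0$, namely $\sum_{k\geq 0}\frac{1}{k!}[\tau^{\wedge k}]$, which is precisely $\mathcal{F}(\tau)$. Thus $\mathcal{F}(\tau+\sigma) = \mathcal{F}(\tau) + \mathcal{F}^\tau(\sigma)$, and since $\tau\in\MC(\gl)$ we have $\mathcal{F}(\tau) = 0$, giving the clean identity $\mathcal{F}(\tau+\sigma) = \mathcal{F}^\tau(\sigma)$. Consequently $\sigma\in\MC(\gl^\tau)$ if and only if $\mathcal{F}(\tau+\sigma)=0$, i.e.\ $\tau+\sigma\in\MC(\gl)$, which is exactly the claimed description of the set.

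The main obstacle, such as it is, is convergence and the bookkeeping of the double sums: I should note that completeness of $\gl$ guarantees absolute convergence of all the series involved (the twisted brackets $[\,\cdot\,]_\tau$ converge by the discussion preceding the lemma, and $\mathcal{F}(\tau+\sigma)$ converges since $\tau+\sigma\in\gl_{-1}$), which justifies the interchange of summation order used to pass between the indexed-by-$n$ and indexed-by-$(k,r)$ forms. Beyond this, the proof is purely formal manipulation; the only genuinely delicate point is confirming the binomial expansion with the correct Koszul signs, after which the result is immediate.
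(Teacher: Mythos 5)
Your proof is correct and is essentially the paper's own argument, just written out in more detail: the paper likewise observes that both conditions $\sigma\in\MC(\gl^\tau)$ and $\sigma+\tau\in\MC(\gl)$ unwind to the vanishing of the same double series $\sum_{n,k}\frac{1}{n!k!}[\sigma^{\wedge n},\tau^{\wedge k}]$, implicitly using $\mathcal{F}(\tau)=0$ to absorb the $r=0$ terms exactly as you do. Your extra care about the Koszul signs (which work out because degree $-1$ elements are symmetric under the anti-symmetry axiom) and about convergence is sound but not a departure from the paper's route.
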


\begin{proof}
After writing them out, the conditions $\sigma\in\MC(\gl^\tau)$ and $\sigma+\tau\in\MC(\gl)$ both turn out to be equivalent to
$$\sum_{n,k\geq 0} \frac{1}{n!k!} [\sigma^{\wedge n},\tau^{\wedge k}] = 0,$$
where an empty bracket is interpreted as zero.
\end{proof}

If $v$ is a vertex of a Kan complex $X$, then we let $X_v$ denote the simplicial subset consisting of the simplices all of whose vertices are $v$. In other words, an $n$-simplex $x$ belongs to $X_v$ if and only if $\partial_1^{n-i}\partial_0^i x = v$ for all $0\leq i\leq n$. It is easy to see that the simplicial set $X_v$ is a Kan complex. Moreover, it is \emph{reduced} in the sense that it has only one vertex. The inclusion $X_v\subseteq X$ induces a homotopy equivalence between $X_v$ and the connected component of $X$ that contains $v$.

\begin{proposition} \label{prop:stranslate}
Let $\gl$ be a nilpotent $L_\infty$-algebra. For every Maurer-Cartan element $\tau$ in $\gl$ there is an isomorphism of reduced Kan complexes
$$\gamma_\bullet(\gl)_\tau \cong \gamma_\bullet(\gl^\tau)_0.$$
\end{proposition}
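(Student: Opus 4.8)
The plan is to construct an explicit simplicial isomorphism $\Phi\colon \gamma_\bullet(\gl^\tau)_0 \to \gamma_\bullet(\gl)_\tau$ level-wise and then check it is compatible with faces and degeneracies. The guiding principle is the translation already established in Lemma~\ref{lemma:translate}: at the level of Maurer--Cartan sets, $\MC(\gl^\tau)$ is identified with $\{\sigma \mid \sigma + \tau \in \MC(\gl)\}$ by the map $\sigma \mapsto \sigma + \tau$. I would like to upgrade this single bijection to a bijection of simplicial sets. The subtlety is that $\gamma_\bullet(\gl^\tau)$ is built from $\MC(\Omega_\bullet \hat\tensor \gl^\tau)$, so I first need to understand how twisting interacts with extension of scalars by $\Omega_n$. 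The key observation is that $(\Omega_n \hat\tensor \gl)^{\hat\tau}$, where $\hat\tau$ denotes the image of $\tau$ under $\gl \to \Omega_n \hat\tensor \gl$ (sending $\tau \mapsto 1 \tensor \tau$), coincides with $\Omega_n \hat\tensor \gl^\tau$ as an $L_\infty$-algebra; this is a formal consequence of the definitions of extension of scalars and of the twisted brackets $[\,\cdot\,]_\tau$, since $1 \tensor \tau$ is a Maurer--Cartan element of $\Omega_n \hat\tensor \gl$ whenever $\tau \in \MC(\gl)$.

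Granting this compatibility, I would define $\Phi_n$ on $n$-simplices by $\sigma \mapsto \sigma + 1\tensor \tau$. First I must verify this lands in the right place. By the $\Omega_n$-analogue of Lemma~\ref{lemma:translate}, $\sigma \in \MC(\Omega_n \hat\tensor \gl^\tau)$ if and only if $\sigma + 1\tensor\tau \in \MC(\Omega_n \hat\tensor \gl)$, so $\Phi_n$ is a bijection $\MC_n(\gl^\tau) \to \MC_n(\gl)$. Next I restrict to the $\gamma_\bullet$ subcomplexes. The constant element $1 \tensor \tau$ satisfies $s_\bullet(1\tensor\tau) = 0$ because $\tau$ lies in degree $-1$ and the contraction $s_\bullet$ lowers form-degree, so $s_\bullet$ annihilates anything of the form (element)$\,\tensor\,1$; hence $s_\bullet \sigma = 0$ if and only if $s_\bullet(\sigma + 1\tensor\tau) = 0$, and $\Phi_n$ restricts to a bijection $\gamma_n(\gl^\tau) \to \gamma_n(\gl)$. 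Then I check that $\Phi$ carries the reduced subcomplex $\gamma_\bullet(\gl^\tau)_0$ onto $\gamma_\bullet(\gl)_\tau$: a simplex $\sigma$ has all vertices equal to $0$ precisely when $\sigma + 1\tensor\tau$ has all vertices equal to $\tau$, since the vertex operators $\partial_1^{n-i}\partial_0^i$ applied to the constant $1 \tensor \tau$ return $\tau$.

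The main remaining obstacle is simpliciality: I must confirm $\Phi$ commutes with the face and degeneracy maps. This reduces to the claim that the correcting term $1 \tensor \tau$ is \emph{simplicially constant}, i.e.\ its image under every $\partial_i$ and $s_j$ is again the corresponding constant. This holds because $\tau$ is independent of the simplicial coordinate and the face/degeneracy maps of $\Omega_\bullet$ act only on the de Rham factor, fixing $1 \in \Omega_n^0$; thus the cosimplicial structure maps send $1 \tensor \tau \mapsto 1 \tensor \tau$. Since the $L_\infty$ structure maps of $\Omega_\bullet \hat\tensor \gl$ are natural in the simplicial variable, $\Phi = (\,\cdot\,) + (1\tensor\tau)$ intertwines them. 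I expect this last verification to be the only genuinely delicate point, as it requires care that the simplicial identities for $\gamma_\bullet$ are preserved under an affine (rather than linear) shift; everything else is a direct translation of Lemma~\ref{lemma:translate} through the functor $\Omega_\bullet \hat\tensor (-)$. Finally, that $\gamma_\bullet(\gl)_\tau$ is a reduced Kan complex homotopy equivalent to the component of $\gamma_\bullet(\gl)$ containing $\tau$ follows from the general remarks preceding the statement, applied to the Kan complex $X = \gamma_\bullet(\gl)$ and the vertex $v = \tau$.
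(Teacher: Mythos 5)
Your proposal is correct and follows essentially the same route as the paper: identify $\Omega_n\hat{\tensor}\gl^\tau$ with $(\Omega_n\hat{\tensor}\gl)^{1\tensor\tau}$, apply Lemma~\ref{lemma:translate} levelwise to get the translation $\sigma\mapsto\sigma+1\tensor\tau$, and observe that this shift is simplicially constant and preserved by the gauge condition $s_\bullet(-)=0$. The paper compresses all of this into one sentence, but the content is the same.
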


\begin{proof}
Since, evidently, $\gl^\tau \tensor \Omega_n = (\gl \tensor \Omega_n)^{\tau\tensor 1}$ it follows from Lemma \ref{lemma:translate} that $n$-simplices $x$ of $\gamma_\bullet(\gl^\tau)_0$ correspond bijectively to $n$-simplices $\tau\tensor 1 + x$ of $\gamma_\bullet(\gl)_\tau$, and it is obvious that this bijection respects face and degeneracy maps.
\end{proof}

\begin{theorem} \label{thm:gammamain}
Let $\gl$ be a nilpotent $L_\infty$-algebra. For every Maurer-Cartan element $\tau$ in $\gl$ and every $n\geq 0$, the map
$$B_n^\tau \colon \HH_n(\gl^\tau)\rightarrow \pi_{n+1}(\gamma_\bullet(\gl),\tau),\quad B_n^\tau[\alpha] = [\tau\tensor 1 + \alpha\tensor \omega^{n+1}],$$
is an isomorphism of groups, where $\HH_0(\gl^\tau)$ is given a group structure via the Campbell-Hausdorff formula.
\end{theorem}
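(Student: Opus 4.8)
The plan is to reduce the statement for an arbitrary Maurer-Cartan base-point $\tau$ to the already-established case $\tau = 0$, which is exactly Theorem~\ref{thm:main}. The bridge between the two situations is provided by Proposition~\ref{prop:stranslate}, which furnishes an isomorphism of reduced Kan complexes $\gamma_\bullet(\gl)_\tau \cong \gamma_\bullet(\gl^\tau)_0$. Since the inclusion $X_v \subseteq X$ of the reduced subcomplex at a vertex $v$ induces a homotopy equivalence onto the connected component containing $v$ (as recalled just before Proposition~\ref{prop:stranslate}), this isomorphism induces isomorphisms on all homotopy groups based at the respective base-points. The guiding idea is therefore to exhibit $B_n^\tau$ as the composite of $B_n$ for the twisted algebra $\gl^\tau$ with the isomorphism on homotopy groups coming from translation by $\tau$.

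\textbf{Main steps.}
First I would record the chain of isomorphisms
$$\pi_{n+1}(\gamma_\bullet(\gl),\tau) \cong \pi_{n+1}(\gamma_\bullet(\gl)_\tau, \tau) \cong \pi_{n+1}(\gamma_\bullet(\gl^\tau)_0, 0) \cong \pi_{n+1}(\gamma_\bullet(\gl^\tau), 0),$$
where the outer two isomorphisms come from the homotopy equivalence between a reduced subcomplex and its ambient component, and the middle one is induced by the isomorphism of Proposition~\ref{prop:stranslate}. Next, since $\gl^\tau$ is itself a complete $L_\infty$-algebra, Theorem~\ref{thm:main} applies to it directly and gives that
$$B_n\colon \HH_n(\gl^\tau) \rightarrow \pi_{n+1}(\gamma_\bullet(\gl^\tau),0)$$
is a group isomorphism (with the Campbell-Hausdorff structure on $\HH_0$ when $n=0$). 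The final step is to check that transporting $B_n$ for $\gl^\tau$ across the isomorphisms above yields precisely the formula $B_n^\tau[\alpha] = [\tau + \alpha \tensor \omega^{n+1}]$. Concretely, $B_n$ for $\gl^\tau$ sends $[\alpha]$ to the class of $\alpha \tensor \omega^{n+1}$ in $\gamma_\bullet(\gl^\tau)_0$, and the translation bijection of Proposition~\ref{prop:stranslate} carries this simplex to $\tau + \alpha \tensor \omega^{n+1}$ in $\gamma_\bullet(\gl)_\tau$. This matches the stated formula, so $B_n^\tau$ is the transport of a group isomorphism and is therefore itself a group isomorphism.

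\textbf{Anticipated obstacle.}
The substantive point to verify carefully is that the translation $\sigma \mapsto \tau + \sigma$ really is simplicial, i.e.\ that it intertwines the simplicial structures so that it induces a well-defined map on homotopy groups respecting base-points; this is precisely what Proposition~\ref{prop:stranslate} asserts, so I can invoke it. The one place requiring genuine bookkeeping is confirming that the group \emph{structure} is transported correctly. For $n \geq 1$ the groups are abelian and the identification is unproblematic, but for $n = 0$ one must check that the Campbell-Hausdorff structure on $\HH_0(\gl^\tau)$ is the one carried to the fundamental group of the component at $\tau$. Since the translation isomorphism is an isomorphism of $L_\infty$-algebras after tensoring with $\Omega_\bullet$ (using $\gl^\tau \hat{\tensor}\, \Omega_n = (\gl \hat{\tensor}\, \Omega_n)^\tau$, as in the proof of Proposition~\ref{prop:stranslate}), the bracket, and hence the Campbell-Hausdorff product, is preserved; this is what guarantees that $B_0^\tau$ is a homomorphism for the Campbell-Hausdorff structure. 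Thus the only real work is assembling the functorial identifications, and no new computation of homotopy groups is needed beyond what Theorem~\ref{thm:main} already provides.
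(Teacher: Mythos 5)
Your proposal is correct and follows exactly the paper's argument: the paper proves this theorem by combining Theorem \ref{thm:main} (the case $\tau=0$, applied to $\gl^\tau$) with the translation isomorphism of Proposition \ref{prop:stranslate}. Your additional care in checking that the translation carries $\alpha\tensor\omega^{n+1}$ to $\tau+\alpha\tensor\omega^{n+1}$ and preserves the Campbell-Hausdorff structure is a sound elaboration of the same route.
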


\begin{proof}
This follows by combining Theorem \ref{thm:main} and Proposition \ref{prop:stranslate}.
\end{proof}

Define the \emph{truncation} $\gl_{\geq 0}$ of an $L_\infty$-algebra $\gl$ to be the chain complex
$$\cdots \rightarrow \gl_{2} \stackrel{\delta_{2}}{\rightarrow} \gl_{1} \stackrel{\delta_{1}}{\rightarrow} \ker(\delta_0) \rightarrow 0 \rightarrow \cdots.$$
As the reader may check, the truncation $\gl_{\geq 0}$ is in fact an $L_\infty$-subalgebra of $\gl$.

\begin{corollary} \label{cor:sullivan model}
Let $\gl$ be a nilpotent $L_\infty$-algebra. For every Maurer-Cartan element $\tau$, the inclusion of $(\gl^\tau)_{\geq 0}$ into $\gl^\tau$ induces a homotopy equivalence of reduced Kan complexes
$$\xymatrix{\gamma_\bullet((\gl^\tau)_{\geq 0}) \ar[r]^-\simeq & \gamma_\bullet(\gl)_\tau.}$$
In particular, if $\gl$ is of finite type, then $C^*((\gl^\tau)_{\geq 0})$ is a Sullivan model for the connected component of $\gamma_\bullet(\gl)$ that contains $\tau$.
\end{corollary}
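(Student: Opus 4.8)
The plan is to first reduce to the base-point $0$ and then compare homotopy groups via the naturality of the maps $B_n$. By Proposition~\ref{prop:stranslate} there is an isomorphism $\gamma_\bullet(\gl)_\tau \cong \gamma_\bullet(\gl^\tau)_0$, so, writing $\hh = \gl^\tau$, it suffices to show that the inclusion of $L_\infty$-algebras $j\colon \hh_{\geq 0}\hookrightarrow \hh$ induces a homotopy equivalence onto the reduced subcomplex $\gamma_\bullet(\hh)_0$. Here $\hh_{\geq 0}$ is complete for the restricted filtration $F_r\hh_{\geq 0} = F_r\hh\cap \hh_{\geq 0}$, and since $(\hh_{\geq 0})_{-1} = 0$ we have $\MC(\hh_{\geq 0}) = \{0\}$; thus $\gamma_\bullet(\hh_{\geq 0})$ is a reduced, hence connected, Kan complex. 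Every simplex of $\gamma_\bullet(\hh_{\geq 0})$ has all of its vertices equal to $0$, so $\gamma_\bullet(j)$ factors as $\gamma_\bullet(\hh_{\geq 0})\stackrel{\iota}{\to}\gamma_\bullet(\hh)_0 \hookrightarrow \gamma_\bullet(\hh)$.

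The heart of the argument is the commutative square supplied by the naturality of $B_n$ (Theorem~\ref{thm:main}),
$$\xymatrix{
\HH_n(\hh_{\geq 0}) \ar[d]_-{\HH_n(j)} \ar[r]^-{B_n} & \pi_{n+1}(\gamma_\bullet(\hh_{\geq 0}),0) \ar[d]^-{\gamma_\bullet(j)_*} \\
\HH_n(\hh) \ar[r]^-{B_n} & \pi_{n+1}(\gamma_\bullet(\hh),0).
}$$
By Theorem~\ref{thm:main} both horizontal maps are isomorphisms, and the left vertical map $\HH_n(j)$ is an isomorphism for every $n\geq 0$ because the inclusion of the truncation $\hh_{\geq 0}\hookrightarrow \hh$ is a homology isomorphism in degrees $\geq 0$. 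Hence $\gamma_\bullet(j)_* = B_n\circ \HH_n(j)\circ B_n^{-1}$ is an isomorphism. Since the inclusion $\gamma_\bullet(\hh)_0\hookrightarrow \gamma_\bullet(\hh)$ is a homotopy equivalence onto the component of $0$, it induces an isomorphism on $\pi_{n+1}$, and therefore $\iota_*$ is an isomorphism for all $n\geq 0$. As $\gamma_\bullet(\hh_{\geq 0})$ and $\gamma_\bullet(\hh)_0$ are connected Kan complexes and $\iota$ induces isomorphisms on all homotopy groups at the base-point, $\iota$ is a homotopy equivalence by Whitehead's theorem. This proves the first assertion.

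For the final statement, suppose $\gl$ is of finite type. Then $\gl_{\geq 0}^\tau$ is non-negatively graded and of finite type, as twisting leaves the underlying graded vector space unchanged and truncation only passes to a subspace; choosing a filtration with finite quotients as in the proof of Proposition~\ref{prop:sullivan algebra} exhibits $\gl_{\geq 0}^\tau$ as profinite. Proposition~\ref{prop:sullivan algebra} then shows that $C^*(\gl_{\geq 0}^\tau)$ is a finite type Sullivan algebra, and Proposition~\ref{prop:spatial} together with the equivalence $\gamma_\bullet(\gl_{\geq 0}^\tau)\simeq \MC_\bullet(\gl_{\geq 0}^\tau)\cong \langle C^*(\gl_{\geq 0}^\tau)\rangle$ identifies its spatial realization with $\gamma_\bullet(\gl_{\geq 0}^\tau)$, which by the first part is homotopy equivalent to $\gamma_\bullet(\gl)_\tau$. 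Thus $C^*(\gl_{\geq 0}^\tau)$ is a Sullivan model for the component of $\gamma_\bullet(\gl)$ containing $\tau$.

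The step I expect to be the main obstacle is the finiteness and completeness bookkeeping in the last paragraph: verifying that twisting by $\tau$ and truncating preserve completeness and, when $\gl$ is of finite type, that $\gl_{\geq 0}^\tau$ admits a filtration with finite quotients so that Propositions~\ref{prop:sullivan algebra} and~\ref{prop:spatial} apply. By contrast, the homotopy-theoretic core---the naturality square together with the homology isomorphism for truncations and Whitehead's theorem---is a clean assembly of results already in hand.
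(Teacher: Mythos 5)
Your argument is correct and follows essentially the same route as the paper: reduce to the base-point $0$ via Proposition~\ref{prop:stranslate}, use the naturality square for $B_n$ together with the fact that $\gl_{\geq 0}\to\gl$ is a homology isomorphism in non-negative degrees, and conclude by Whitehead's theorem for reduced Kan complexes, with the final statement coming from Proposition~\ref{prop:spatial}. The extra bookkeeping you supply (reducedness of $\gamma_\bullet(\gl_{\geq 0}^\tau)$, completeness of the truncation, profiniteness in the finite type case) is left implicit in the paper but is accurate.
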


\begin{proof}
By Proposition \ref{prop:stranslate} we may assume that $\tau = 0$ without loss of generality. It is clear that the image of $\gamma_\bullet(\gl_{\geq 0})\rightarrow \gamma_\bullet(\gl)$ is contained in the simplicial subset $\gamma_\bullet(\gl)_0$. We need to show that the resulting map $\gamma_\bullet(\gl_{\geq 0}) \rightarrow \gamma_\bullet(\gl)_0$ is a homotopy equivalence. Since both the source and target are reduced Kan complexes, this happens if and only if the map induces an isomorphism on $\pi_n$ (at the unique base-point) for all $n\geq 1$. Since $\gl_{\geq 0}\rightarrow \gl$ induces an isomorphism in homology in non-negative degrees, Theorem \ref{thm:main} together with naturality of the map $B_n$ finishes the proof:
$$\xymatrix{\HH_n(\gl_{\geq 0}) \ar[d]^-\cong \ar[r]_-{B_n}^-\cong & \pi_{n+1}(\gl_{\geq 0}) \ar[d] \\ \HH_n(\gl) \ar[r]_-{B_n}^-\cong & \pi_{n+1}(\gl).}$$
The last statement follows from Proposition \ref{prop:spatial} below.
\end{proof}

\subsection*{Naturality}
Let us be more explicit about the naturality properties of the map $B_n^\tau$ in Theorem \ref{thm:gammamain}. Given an $L_\infty$-morphism $f\colon \gl \rightarrow \hll$ between nilpotent $L_\infty$-algebras there is an induced map $f_*\colon \MC(\gl)\rightarrow \MC(\hll)$ given by
$$f_*(\tau) = \sum_{n \geq 0} \frac{1}{n!}f_n(\tau^{\wedge n}).$$
Moreover, for each Maurer-Cartan element $\tau$ in $\gl$, there is an induced $L_\infty$-morphism $f^\tau\colon \gl^\tau \rightarrow \hll^{f_*(\tau)}$ given by
$$f_n^\tau(x_1,\ldots,x_n) = \sum_{\ell \geq 0} \frac{1}{\ell!} f_{n+\ell}(\tau^{\wedge \ell},x_1,\ldots,x_n).$$
Naturality means that the following diagram is commutative:
$$\xymatrix{\HH_n(\gl^\tau) \ar[r]^-{B_n^\tau} \ar[d]^-{\HH_n(f^\tau)} & \pi_{n+1}(\gamma_\bullet(\gl),\tau) \ar[d]^-{f_*} \\ \HH_n(\hll^{f_*(\tau)}) \ar[r]^-{B_n^{f_*(\tau)}} & \pi_{n+1}(\gamma_\bullet(\hll),f_*(\tau))}$$
We leave the verification to the reader.

\subsection*{Extension to degree-wise nilpotent $L_\infty$-algebras}
Everything we have done in this section extends verbatim to degree-wise nilpotent $L_\infty$-algebras, because each calculation involves only a finite number of graded pieces at a time. More precisely, if $\gl$ is degree-wise nilpotent, then so is $\gl\tensor \Omega_n$ for every $n$, because $\Omega_n$ is bounded. So we may form the simplicial sets $\MC_\bullet(\gl)$ and $\gamma_\bullet(\gl)$ as before. Note that $\gamma_n(\gl)$ only depends on $\gl_{-2},\gl_{-1},\ldots,\gl_{n-1}$, because $\Omega_n^i = 0$ unless $0\leq i\leq n$. For a fixed $n$, degree-wise nilpotence allows us to choose a $k$ so that $(\Gamma^k\gl)_i = 0$ for $-2\leq i\leq n$. It follows that $\gamma_\bullet(\gl) \rightarrow \gamma_\bullet(\gl/\Gamma^k\gl)$ is an isomorphism on $n$-skeleta. This shows that the homotopy groups of $\gamma_\bullet(\gl)$ may be calculated by passing to suitable nilpotent quotients $\gl/\Gamma^k \gl$. Using this observation, it is easy to see that the proof of Theorem \ref{thm:gammamain} goes through for degree-wise nilpotent $\gl$.

\section{Complete $L_\infty$-algebras}
In this section we will extend the definition of $\gamma_\bullet$ to what we call \emph{complete $L_\infty$-algebras}. This extension will be necessary for our applications to mapping spaces in the next section, in particular when the source is not a finite complex.

\begin{definition}
By a \emph{complete $L_\infty$-algebra} we mean an $L_\infty$-algebra $\gl$ together with a descending filtration of $L_\infty$-ideals, $\gl = F^1\gl \supseteq F^2\gl \supseteq \cdots$, such that
\begin{enumerate}
\item Each quotient $\gl/F^r\gl$ is a nilpotent $L_\infty$-algebra.
\item The canonical map $\gl \rightarrow \varprojlim \gl/F^r\gl$ is an isomorphism. In other words, the topology defined by the filtration is complete and Hausdorff.
\end{enumerate}
Maps of complete $L_\infty$-algebras are required to preserve filtrations.
\end{definition}
Malcev Lie algebras in the sense of Quillen \cite[Appendix A3]{Quillen}, or complete Lie algebras in the sense of Papadima-Suciu \cite[\S 5]{PS}, are examples of complete $L_\infty$-algebras. Other examples are given by degree-wise nilpotent $L_\infty$-algebras; these are complete with respect to the lower central series filtration. For finite type $L_\infty$-algebras, completeness is essentially the same thing as degree-wise nilpotence:

\begin{proposition}
Every complete $L_\infty$-algebra of finite type is degree-wise nilpotent.
\end{proposition}

\begin{proof}
If $\gl$ is complete and of finite type, then the inverse system $\gl/F^r\gl$ must stabilize degree-wise. In other words, for every $n$ there is an $r$ such that $(F^r\gl)_n = 0$. But the $L_\infty$-algebra $\gl/F^r\gl$ is nilpotent, so there is a $k$ such that $\Gamma^k\gl \subseteq F^r\gl$, whence $(\Gamma^k\gl)_n= 0$.
\end{proof}

Of course, the filtration on $\gl$ in the above proof could be different from the lower central series, but it is really only the topology defined by the filtration that matters. The condition $(F^r\gl)_n = 0$ for $r$ large means that the topology on $\gl_n$ is discrete. In effect, it is only for $L_\infty$-algebras that are not of finite type that the notion of completeness goes beyond the notion of degree-wise nilpotence.

We note that the series $d\tau + \frac{1}{2}[\tau,\tau] + \cdots$ converges if $\gl$ is complete, so we may define Maurer-Cartan elements as before.

\begin{definition}
We define the nerve of a complete $L_\infty$-algebra $\gl$ by
$$\MC_\bullet(\gl) = \MC(\gl \htensor \Omega_\bullet),$$
where we take the completed tensor product $\gl \htensor \Omega_\bullet = \varprojlim \big(\gl/F^r\gl \tensor \Omega_\bullet\big)$.
The gauge $s_\bullet$ extends to $\gl\htensor \Omega_\bullet$, and we define $\gamma_\bullet(\gl) = \MC_\bullet(\gl)\cap \ker s_\bullet$ as before.
\end{definition}

This definition of $\MC_\bullet(\gl)$ and $\gamma_\bullet(\gl)$ extends the previous definition. Indeed, if $\gl$ is a degree-wise nilpotent $L_\infty$-algebra equipped with the lower central series filtration, then $\gl \htensor \Omega_\bullet = \gl\tensor \Omega_\bullet$ because every $\Omega_n$ is bounded. As a further justification of this definition, note that if $\gl$ is a Malcev Lie algebra \cite[Appendix A3]{Quillen}, then $\gamma_\bullet(\gl)$ is isomorphic to the nerve of the corresponding Malcev group.

\begin{proposition} \label{prop:mcgamma}
For every complete $L_\infty$-algebra $\gl$, the inclusion of $\gamma_\bullet(\gl)$ into $\MC_\bullet(\gl)$ is a homotopy equivalence. Moreover, the functors $\MC_\bullet$ and $\gamma_\bullet$ take surjective maps of complete $L_\infty$-algebras to Kan fibrations.
\end{proposition}

\begin{proof}
We have that $\MC_\bullet(\gl) = \varprojlim \MC_\bullet(\gl/F^r\gl)$ and $\gamma_\bullet(\gl) = \varprojlim \gamma_\bullet(\gl/F^r\gl)$. The inverse systems are towers of Kan fibrations, so our claims follow from the nilpotent case together with standard facts about towers of fibrations: if a map between towers of fibrations is a levelwise fibration or a weak equivalence then so is the induced map on inverse limits, see, e.g., \cite[Chapter IV]{GJ}.
\end{proof}

\begin{theorem} \label{thm:main}
Let $\gl$ be a complete $L_\infty$-algebra and let $\tau$ be a Maurer-Cartan element in $\gl$. There are natural isomorphism of groups for all $n\geq 0$,
$$B_n^\tau\colon \HH_n(\gl^\tau) \rightarrow \pi_{n+1}(\gamma_\bullet (\gl),\tau),$$
where $\HH_0(\gl^\tau)$ is given a group structure via the Campbell-Hausdorff formula.
\end{theorem}

\begin{proof}
Let $\gl^{(r)}$ denote the nilpotent $L_\infty$-algebra $\gl/F^r\gl$. We define the map $B_n^\tau$ as follows. Given an $n$-cycle $\alpha$ in $\gl$, let $\alpha_r$ and $\tau_r$ denote the images of $\alpha$ and $\tau$ in $\gl^{(r)}$. The elements $\tau_r\tensor 1 + \alpha_r\tensor \omega^{n+1} \in \gl^{(r)}\tensor \Omega_{n+1}$, are compatible and define an $(n+1)$-simplex in $\gamma_\bullet(\gl) = \varprojlim \gamma_\bullet(\gl^{(r)})$, whose simplicial boundary is trivial. We define $B_n^\tau[\alpha]$ to be the class represented by this simplex. To show that $B_n^\tau$ is an isomorphism, it is possible to reduce the case $\tau=0$ as before. In this case, the claim follows from the fact that $B_n\colon \HH_n(\gl^{(r)}) \to \pi_{n+1}(\gl^{(r)},0)$ is an isomorphism for all $r$ (by Theorem \ref{thm:main nilpotent}) together with the fact that $B_n$ is suitably compatible with $\limo$-sequences associated to towers of fibrations. The latter fact will be verified in Proposition \ref{prop:lim^1} below. (The reader may want to skip to the next section at a first reading.)
\end{proof}

By definition, a complete $L_\infty$-algebra $\gl$ is an inverse limit of a tower surjections of nilpotent $L_\infty$-algebras,
$$\cdots \twoheadrightarrow \gl^{(r)} \stackrel{p}{\twoheadrightarrow} \gl^{(r-1)} \twoheadrightarrow \cdots \twoheadrightarrow \gl^{(-1)}=0.$$
We get a tower of Kan fibrations after applying the functor $\gamma_\bullet(-)$, whence a $\varprojlim^1$-sequence of homotopy groups (cf. \cite[Theorem IX.3.1]{BK}),
$$\xymatrix{{*} \ar[r] &  \varprojlim^1 \pi_{n+1}(\gl^{(r)}) \ar[r] & \pi_n(\gl) \ar[r] & \varprojlim \pi_n(\gl^{(r)}) \ar[r] & {*}},$$
where $\pi_n(\gl) = \pi_n(\gamma_\bullet(\gl),0) = \pi_n(\varprojlim \gamma_\bullet(\gl^{(r)}),0)$. There is also a $\varprojlim^1$-sequence associated to the tower of surjections of chain complexes,
$$\xymatrix{0 \ar[r] &  \varprojlim^1 \HH_{n+1}(\gl^{(r)}) \ar[r] & \HH_n(\gl) \ar[r] & \varprojlim \HH_n(\gl^{(r)}) \ar[r] & 0.}$$

\begin{proposition} \label{prop:lim^1}
Let $\gl$ be a complete $L_\infty$-algebra. The diagram
$$\xymatrix{0 \ar[r] &  \varprojlim^1 \HH_{n+1}(\gl^{(r)}) \ar[r] \ar[d]^-{(-1)^n\varprojlim^1 B_{n+1}} & \HH_n(\gl) \ar[r]^-{f^H} \ar[d]^-{B_n} & \varprojlim \HH_n(\gl^{(r)}) \ar[r] \ar[d]^-{\varprojlim B_n} & 0 \\
0 \ar[r] &  \varprojlim^1 \pi_{n+2}(\gl^{(r)}) \ar[r] & \pi_{n+1}(\gl) \ar[r]^-{f^\pi} & \varprojlim \pi_{n+1}(\gl^{(r)}) \ar[r] & 0}$$
is commutative for all $n\geq 0$.
\end{proposition}

\begin{proof}
It is easy to check that the right square commutes. To show that the left square commutes, we must first recall how the kernels of the maps $f^H$ and $f^\pi$ are identified with the respective $\limo$-groups.

First we recall the definition of $\limo$ for groups, cf.~\cite[IX.\S2]{BK}. Given a tower of groups,
$$\cdots \rightarrow G_r\stackrel{p}{\rightarrow} G_{r-1} \rightarrow \cdots \rightarrow G_{-1} = *,$$
$\limo G_r$ is defined as the set of equivalence classes
$$\limo G_r = \prod_r G_r / \sim$$
where $(x_r)_r \sim (y_r)_r$ if there is a sequence $(g_r)_r$ such that
$$y_r = g_r x_r (pg_{r+1})^{-1},\quad \mbox{for all $r\geq 0$}.$$

Next, let $[(\alpha_r)_r]$ be an element in the kernel of $f^H\colon \HH_k(\gl) \rightarrow \varprojlim \HH_k(\gl^{(r)})$. This means that each $\alpha_r\in \gl_n^{(r)}$ is a boundary; say $\delta \beta_r = \alpha_r$. Since $p\alpha_{r+1} = \alpha_r$ for all $r$, the element $p\beta_{r+1}-\beta_r$ is a cycle. The identification
$$\xymatrix{\ker f^H \ar[r]^-\cong & {\varprojlim}^1 \HH_{n+1}(\gl^{(r)})}$$
is effected by sending the class $[(\alpha_r)_r]$ to the equivalence class represented by the sequence $([p \beta_{r+1}-\beta_r])_r \in \prod_r \HH_{n+1}(\gl^{(r)})$.

The identification
$$\xymatrix{\ker f^\pi \ar[r]^-\cong & \limo \pi_{n+2}(\gl^{(r)})}$$
goes as follows (cf. \cite[IX.\S3]{BK}). Given an element $[(a_r)_r]$ in the kernel of $f^\pi$, each $[a_r]\in \pi_{n+2}(\gl^{(r)})$ is trivial, so there are $(n+2)$-simplices $b_r\in \gamma_{n+2}(\gl^{(r)})$ with simplicial boundary
$$\partial b_r = (a_r,0,\ldots,0).$$
Choose a filler $c_r\in \gamma_{n+3}(\gl^{(r)})$ for the horn
$$(pb_{r+1},b_r,-,0,\ldots,0) \colon \Lambda^2[n+3] \rightarrow \gamma_\bullet(\gl).$$
Then $\partial(\partial_2 c_r) = (0,\ldots,0)$, so $\partial_2 c_r$ represents a homotopy class in $\pi_{n+2}(\gl^{(r)})$. The element $[(a_r)_r]$ is sent to the equivalence class in $\limo \pi_{n+2}(\gl^{(r)})$ represented by the sequence
$$([\partial_2 c_r])_r \in \prod_r \pi_{n+2}(\gl^{(r)}).$$

Let $[(\alpha_r)_r]$ be an element in the kernel of $f^H$. The map $B_n\colon \HH_n(\gl)\rightarrow \pi_{n+1}(\gl)$ sends $[(\alpha_r)_r]$ to the class $[(a_r)_r]$ where $a_r := \alpha_r\tensor \omega^{n+1}$. By commutativity of the right square this class belongs to the kernel of $f^\pi$. To prove commutativity of the left square, it suffices to prove that a filler $c_r$ may be chosen such that
\begin{equation} \label{eq:c}
[\partial_2 c_r] = (-1)^n [(p\beta_{r+1}-\beta_r)\tensor \omega^{n+2}]\in \pi_{n+2}(\gl^{(r)}),\quad \mbox{for all $r\geq 0$}.
\end{equation}
To this end first observe that, in the notation above, we may choose
\begin{align*}
b_r & = (d+\delta)(\beta_r\tensor \omega_{\hat{0}}) \\
& = \alpha_r\tensor \omega_{\hat{0}} + (-1)^{n+1} \beta_r\tensor \omega^{n+2}.
\end{align*}
Indeed, $\partial b_r = (a_r,0,\ldots,0)$ and $b_r\in \gamma_{n+2}(\gl^{(r)})$. Next, consider the following element of $(\gl^{(r)}\tensor \Omega_{n+3})_{-1}$:
\begin{align*}
\lambda_0 & = (d+\delta)((-1)^n \alpha_r \tensor \omega_{3\ldots n+3} -\beta_r\tensor \omega_{03\ldots n+3} - p\beta_{r+1}\tensor \omega_{13\ldots n+3}) \\
& = \alpha_r\tensor \omega_{2\ldots n+3} + (-1)^{n+1} p\beta_{r+1}\tensor \omega_{1\ldots n+3} \\
& \quad + (-1)^{n+1} \beta_r\tensor \omega_{02\ldots n+3} + (-1)^n (p\beta_{r+1} - \beta_r)\tensor \omega_{013\ldots n+3}.
\end{align*}
It has simplicial boundary
$$\partial \lambda_0 = (pb_{r+1},b_r,(-1)^n(p\beta_{r+1}-\beta_n)\tensor \omega^{n+2},0,\ldots,0).$$
Clearly, $(d+\delta) \lambda_0 = 0$. If $n>1$, then for degree reasons $[\lambda_0^{\wedge \ell}] = 0$ for any $\ell \geq 2$. This is also true for $k = 1$ by direct calculation. Thus, if $n>0$ then $\lambda_0$ is a Maurer-Cartan element and we may choose $c_r =\lambda_0 \in \gamma_{n+3}(\gl^{(r)})$ as our filler. In this case we are done because $\partial_2 \lambda_0 = (-1)^n(p\beta_{r+1}-\beta_r)\tensor \omega^{n+2}$, so that \eqref{eq:c} is fulfilled already before passing to homotopy classes.

The case $k = 0$ requires a little more care. Then we have that
$$\lambda_ 0 = \alpha_r\tensor \omega_{23} - p\beta_{r+1} \tensor \omega_{123} - \beta_r\tensor \omega_{023} + (p\beta_{r+1}-\beta_r)\tensor \omega_{013}.$$
One verifies easily that $[\lambda_0^{\wedge \ell}] = 0$ for $\ell \geq 3$, but
$$\frac{1}{2} [\lambda_0,\lambda_0] = - [\alpha_r,p\beta_{r+1}-\beta_r]\tensor \frac{1}{3}t_3 \omega_{0123},$$
so $\lambda_0$ is not necessarily a Maurer-Cartan element. However, by \cite[Lemma 5.3]{Getzler} there is a unique element $\lambda \in \gamma_3(\gl^{(r)})$ such that $\epsilon_3^2 \lambda = 0$ and $P_3 R_3^2\lambda = \lambda_0$. Moreover, this element has the property that $\partial_i(\lambda) = \partial_i(\lambda_0)$ for $i\ne 2$. Therefore, we may choose $c_r:=\lambda$ as a filler, and it remains to verify that \eqref{eq:c} holds.

The element is obtained as a limit $\lambda  = \lim_{k\rightarrow \infty} \lambda_k$, where $\lambda_k$ is defined by the iterative formula
$$\lambda_k = \lambda_0 - \sum_{\ell \geq 2} \frac{1}{\ell !} F [\lambda_{k-1}^{\wedge \ell}],\quad k\geq 1,$$
for a certain operator\footnote{$F = P_3h_3^2 +s_3$ in the notation of \cite{Getzler}.} $F\colon \Omega_3^* \rightarrow \Omega_3^{*-1}$. In the first iteration, a calculation yields
$$\lambda_1 = \lambda_0 - [\alpha_r,p\beta_{r+1}-\beta_r]\tensor F(\frac{1}{3}t_3\omega_{0123}).$$
Observe that since $\alpha_r = \delta \beta_r$ and $p\beta_{r+1} - \beta_r$ is a cycle, we have that
$$[\alpha_r,p\beta_{r+1}-\beta_r] = \delta [\beta_r,p\beta_{r+1} -\beta_r],$$
so $\lambda_0 - \lambda_1$ is a $\delta$-boundary. Moreover, $\lambda_0-\lambda_1 \in \gl_1^{(r)}\tensor \Omega_3^2$. One checks by induction that the same is true for all $k$:
\begin{itemize}
\item $\lambda_0 -\lambda_k$ is a $\delta$-boundary.
\item $\lambda_0 - \lambda_k\in \gl_1^{(r)} \tensor \Omega_3^2$.
\end{itemize}
This implies that $\partial_2(\lambda_0)-\partial_2(\lambda)$ is a $\delta$-boundary in $\gl_1^{(r)}\tensor \Omega_2^2$. Now, $\partial_2(\lambda_0) = (p\beta_{r+1}-\beta_r)\tensor \omega^2 \in \gl_1^{(r)}\tensor \Omega_2^2$. Therefore $\partial_2(\lambda)\in \gl_1^{(n)}\tensor \Omega_2^2$. Since we also have that $\partial_2(\lambda)\in\gamma_2(\gl^{(r)})$, this implies that $\partial_2(\lambda) = \xi\tensor \omega^2$ for some cycle $\xi\in \gl_1^{(r)}$. Moreover, the elements $\xi$ and $p\beta_{r+1}-\beta_r$ differ by a boundary, whence
$$[\partial_2(\lambda)] = [(p\beta_{r+1}-\beta_r)\tensor \omega^2] \in \pi_2(\gl^{(r)}),$$
by well-definedness of the map $B_1\colon \HH_1(\gl^{(r)})\rightarrow \pi_2(\gl^{(r)})$. Thus \eqref{eq:c} is satisfied for $c_r = \lambda$, and this finishes the proof.
\end{proof}

\section{Application: rational models for mapping spaces} \label{sec:bs}
In this section, we show that the space of maps into a $\QQ$-local space is homotopy equivalent to the Deligne-Getzler $\infty$-groupoid of a complete $L_\infty$-algebra $A\htensor L$, where $A$ is a cdga model for the source and $L$ is an $L_\infty$-algebra model for the target.

As is well-known, the simplicial cdga $\Omega_\bullet$ gives rise to a (contravariant) adjunction between simplicial sets and cdgas,
\begin{equation} \label{eq:sadj}
\adjunction{\mathsf{sSet}}{\mathsf{CDGA}_\QQ^{op},}{\Omega}{\langle - \rangle}
\end{equation}
$$\Omega(X) = \Hom_{sSet}(X,\Omega_\bullet),\quad \langle B \rangle = \Hom_{cdga}(B,\Omega_\bullet).$$
The cdga $\Omega(X)$ is the Sullivan-deRham algebra of polynomial differential forms on $X$, and $\langle B \rangle$ is the \emph{spatial realization} of $B$. It is a fundamental result in rational homotopy theory that the adjunction induces an equivalence between the homotopy categories of nilpotent rational spaces of finite $\QQ$-type and minimal Sullivan algebras of finite type, see \cite{BG,Sullivan}. Under the correspondence in Theorem \ref{thm:sullivan}, the spatial realization corresponds to the nerve:

\begin{proposition} \label{prop:spatial}
Let $L$ be a non-negatively graded degree-wise nilpotent $L_\infty$-algebra of finite type and let $A$ be a cdga. If $A$ or $L$ is bounded, then there is a natural isomorphism
$$\MC(A\tensor L) \cong \Hom_{cdga}(C^*(L),A).$$
In particular, the nerve of $L$ is isomorphic to the spatial realization of its associated Sullivan algebra: $\MC_\bullet(L) \cong \langle C^*(L)\rangle$.
\end{proposition}

\begin{proof}
The underlying graded commutative algebra of $C^*(L)$ is free on $L[1]^\vee$. Thus, given an element $\tau$ in $A\tensor L$ of degree $-1$, we can define a morphism of graded algebras $f_\tau \colon C^*(L) \rightarrow A$ by $f_\tau(\xi) = (1\tensor \xi)(\tau)$. If $\tau$ is a Maurer-Cartan element, then $f_\tau$ commutes with differentials. We leave the rest of the proof to the reader.
\end{proof}

The category of cdgas is enriched in simplicial sets via
$$\Map_{cdga}(B,A) = \Hom_{cdga}(B,A\tensor \Omega_\bullet),$$
see \cite[\S5]{BG}, and the category of simplicial sets is enriched in itself via
$$\Map_{sSet}(X,Y) = \Hom_{sSet}(X\times \Delta[\bullet],Y).$$
It is natural to ask to what extent the adjunction \eqref{eq:sadj} is compatible with the simplicial enrichments. This has been answered by Brown and Szczarba.

\begin{theorem}[{Brown-Szczarba \cite[Theorem 2.20]{BS}}] \label{thm:bs}
Let $X$ be a connected simplicial set and let $B$ be a finite type Sullivan algebra. There is a natural homotopy equivalence of Kan complexes
$$BS\colon\Map_{cdga}(B,\Omega(X)) \stackrel{\simeq}{\rightarrow} \Map_{sSet}(X,\langle B \rangle).$$
Furthermore, the functor $\Map_{cdga}(B,-)$ takes quasi-isomorphisms between commutative differential graded algebras to homotopy equivalences between Kan complexes.
\end{theorem}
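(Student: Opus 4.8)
The plan is to construct the map $BS$ explicitly at the level of simplices and then verify the homotopy-equivalence claim by a standard reduction along the Postnikov-type tower of $B$. An $n$-simplex of $\Map_{cdga}(B,\Omega(X))$ is a cdga morphism $B \to \Omega(X)\tensor \Omega_n = \Omega(X\times\Delta[n])$, where the last identification uses the Eilenberg--Zilber-type comparison $\Omega(X)\tensor\Omega_n \cong \Omega(X\times\Delta[n])$ together with the fact that $\Omega$ is monoidal up to natural quasi-isomorphism. On the other side, an $n$-simplex of $\Map_{sSet}(X,\langle B\rangle)$ is a simplicial map $X\times\Delta[n]\to \langle B\rangle = \Hom_{cdga}(B,\Omega_\bullet)$, which by adjunction is exactly a cdga morphism $B\to \Omega(X\times\Delta[n])$. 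Thus $BS$ is essentially the adjunction isomorphism composed with the monoidal comparison, and the main point is that these two descriptions of an $n$-simplex agree naturally in $n$ (compatibly with faces and degeneracies) and in $X$ and $B$. First I would write down $BS$ on $0$-simplices as the ordinary adjunction bijection $\Hom_{cdga}(B,\Omega(X))\cong \Hom_{sSet}(X,\langle B\rangle)$, then check that the simplicial enrichments are matched by the Eilenberg--Zilber map.

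The substance of the theorem is that $BS$ is a \emph{homotopy equivalence}, not merely a natural map, and here I would exploit the Sullivan filtration of $B$. Since $B=(\Lambda V,d)$ is a finite type Sullivan algebra, it is the colimit of the sub-Sullivan algebras $B(k)=(\Lambda V(k),d)$, and each inclusion $B(k-1)\hookrightarrow B(k)$ is a relative Sullivan extension obtained by adjoining finitely many generators, i.e.\ a pushout along a map from a free algebra on a finite-dimensional $V(k)/V(k-1)$. Applying the contravariant functors, $\langle B\rangle = \varprojlim \langle B(k)\rangle$ is an inverse limit of a tower of fibrations, and likewise $\Map_{cdga}(B,\Omega(X)) = \varprojlim \Map_{cdga}(B(k),\Omega(X))$. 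The plan is to prove the equivalence by induction on $k$: establish it for the base case where $B$ is a free (quasi-free) algebra on generators in a single degree, prove a relative statement that $BS$ is compatible with the fibration structure, and then pass to the inverse limit using a $\varprojlim^1$/Milnor-sequence argument to control the tower.

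For the base case one reduces to $B=(\Lambda W, 0)$ with $W$ finite and concentrated in a single positive degree $p$; then $\langle B\rangle$ is a product of rational Eilenberg--MacLane spaces $K(\QQ,p)$, and both mapping spaces compute, on homotopy groups, the cohomology $\HH^{*}(X;\QQ)$ in the appropriate degrees via $\Map_{cdga}((\Lambda W,0),\Omega(X))$ detecting cocycles and $\Map_{sSet}(X,\prod K(\QQ,p))$ detecting cohomology. Checking that $BS$ induces the identity (the de Rham isomorphism) on these homotopy groups is the computational heart; it uses that $\Omega(X)$ computes $\HH^*(X;\QQ)$ and that the elementary forms pair correctly with simplices. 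For the inductive step, the key compatibility is that a relative Sullivan extension induces, after applying each functor, a fibration whose fiber is again a mapping space into a product of Eilenberg--MacLane spaces, and that $BS$ carries one fibration to the other, inducing the base-case equivalence on fibers and (by induction) on bases; the five lemma on the long exact homotopy sequences then gives the equivalence on total spaces. The last sentence of the theorem follows because a quasi-isomorphism of Sullivan algebras induces a weak equivalence of spatial realizations, and $\Map_{sSet}(X,-)$ preserves weak equivalences between Kan complexes for fixed $X$.

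\emph{Main obstacle.} I expect the hardest part to be the inverse-limit step: verifying that the tower $\{\Map_{cdga}(B(k),\Omega(X))\}_k$ is a tower of fibrations whose $\varprojlim^1$ terms vanish (or are controlled) so that the homotopy groups of the limit are computed correctly, and simultaneously that $BS$ commutes with the inverse limits on the nose. This requires the finite-type hypothesis on $B$ in an essential way and careful bookkeeping of the Eilenberg--Zilber comparison across the whole tower; the homotopy-equivalence conclusion, as opposed to a weak equivalence, then follows because all spaces in sight are Kan complexes.
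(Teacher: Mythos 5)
Your main argument is, in substance, the paper's own proof transposed into the dual language: the paper writes $B = C^*(L)$ for a non-negatively graded profinite $L_\infty$-algebra $L$ (Proposition \ref{prop:sullivan algebra}), defines the comparison map exactly as you do --- the natural cdga map $\Omega(\Delta[n])\tensor\Omega(X)\to\Omega(\Delta[n]\times X)$ followed by the adjunction (Proposition \ref{prop:mu}, Theorem \ref{thm:map}) --- and proves it is an equivalence by the same three-step reduction: an abelian/Eilenberg--MacLane base case, induction over central extensions (your relative Sullivan extensions) via principal fibrations and the five lemma, and passage to the inverse limit of a tower of fibrations. Two small corrections to the way you set this up: the map $\Omega(X)\tensor\Omega_n\to\Omega(X\times\Delta[n])$ is a quasi-isomorphism but \emph{not} an isomorphism, so $BS$ is genuinely only a natural map and not ``the adjunction isomorphism composed with a comparison''; and at the limit stage you do not need ${\varprojlim}^1$ to vanish --- a levelwise weak equivalence of towers of Kan fibrations induces a weak equivalence on inverse limits simply by comparing the two Milnor sequences, which is all the paper uses.

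The one genuine gap is your treatment of the final assertion. The claim is that $\Map_{cdga}(B,-)$ sends a quasi-isomorphism $A\to A'$ of \emph{arbitrary} cdgas, in the second variable, to a homotopy equivalence. Your justification instead concerns quasi-isomorphisms of Sullivan algebras in the first variable and invokes $\Map_{sSet}(X,-)$; this does not address the statement, and it cannot be repaired by citing the first part of the theorem, since a general cdga $A$ need not be of the form $\Omega(X)$. The correct argument is to run your filtration induction once more with $A$ in place of $\Omega(X)$: for the base case $B=(\Lambda W,0)$ with $W$ finite, $\Map_{cdga}(B,A)$ is (via Proposition \ref{prop:abelian}, or directly) a Dold--Kan-type construction whose homotopy groups are computed by $\HH^*(A)$ in the degrees prescribed by $W$, so a quasi-isomorphism $A\to A'$ induces isomorphisms there; the inductive step and the passage to the inverse limit are then identical to the ones you already described. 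With that replacement the proposal matches the paper's proof.
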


For completeness, we offer a short proof using nerves of $L_\infty$-algebras below. The following is a consequence.

\begin{theorem} \label{thm:map tensor}
Let $X$ be a connected simplicial set and let $Y$ be a nilpotent space of finite $\QQ$-type. If $L$ is a degree-wise nilpotent $L_\infty$-algebra model for $Y$ of finite type and $A$ is a cdga model for $X$, then there is a homotopy equivalence of Kan complexes
$$\Map(X,Y_\QQ) \simeq \gamma_\bullet(A\hat{\tensor} L),$$
where $A\htensor L$ is completed with respect to the filtration $A\tensor L_{\geq r}$.
\end{theorem}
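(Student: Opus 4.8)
The plan is to assemble the statement from the two main ingredients already at hand: the Brown--Szczarba theorem (Theorem \ref{thm:bs}), which identifies the mapping space with a cdga mapping space, and Proposition \ref{prop:spatial} together with the identification of nerves via Lemma \ref{lemma:ce}, which convert cdga-theoretic data into Maurer--Cartan data. First I would fix a finite type Sullivan model $B$ for $Y_\QQ$; since $Y$ is nilpotent of finite $\QQ$-type, we may take $B = C^*(L)$ for a non-negatively graded finite type profinite $L_\infty$-algebra $L$, by Proposition \ref{prop:sullivan algebra}. Then $\langle B \rangle = \langle C^*(L)\rangle$ is a model for $Y_\QQ$, and by Theorem \ref{thm:bs} there is a natural homotopy equivalence
$$\Map(X,Y_\QQ) \simeq \Map_{sSet}(X,\langle C^*(L)\rangle) \simeq \Map_{cdga}(C^*(L),\Omega(X)).$$
The second equivalence is the Brown--Szczarba map $BS$. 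At this point the target is a cdga mapping space, and the remaining work is to rewrite it as $\gamma_\bullet(A\hat{\tensor} L)$.

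The next step is to unwind the definition of the simplicial enrichment and apply Lemma \ref{lemma:ce} levelwise. By definition,
$$\Map_{cdga}(C^*(L),\Omega(X))_n = \Hom_{cdga}(C^*(L),\Omega(X)\tensor \Omega_n).$$
Since $L$ is profinite, Lemma \ref{lemma:ce} gives a natural bijection $\Hom_{cdga}(C^*(L),R)\cong \MC(R\hat{\tensor} L)$ for any cdga $R$; applying this with $R = \Omega(X)\tensor \Omega_n$ and using the associativity relation $A\hat{\tensor}(B\hat{\tensor} L)\cong (A\tensor B)\hat{\tensor} L$ together with $\Omega(X)\tensor \Omega_n$ in place of the tensor factor, we obtain a natural isomorphism of simplicial sets
$$\Map_{cdga}(C^*(L),\Omega(X)) \cong \MC_\bullet(\Omega(X)\hat{\tensor} L).$$
Here I am using that $\MC_\bullet(\gl) = \MC(\Omega_\bullet\hat{\tensor}\gl)$ and that $\Omega(X)\hat{\tensor}(\Omega_\bullet\hat{\tensor} L)\cong (\Omega(X)\tensor \Omega_\bullet)\hat{\tensor} L$, so the simplicial structure matches on both sides. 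Finally, since the inclusion $\gamma_\bullet(\gl)\subseteq \MC_\bullet(\gl)$ is a homotopy equivalence of Kan complexes for any complete $\gl$ (from \cite[Corollary 5.11]{Getzler}, as recalled in the text), we get
$$\MC_\bullet(\Omega(X)\hat{\tensor} L) \simeq \gamma_\bullet(\Omega(X)\hat{\tensor} L).$$

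It remains to pass from the specific cdga model $\Omega(X)$ to an arbitrary cdga model $A$ for $X$. Here I would invoke the second assertion of Theorem \ref{thm:bs}: the functor $\Map_{cdga}(B,-)$ carries quasi-isomorphisms to homotopy equivalences. A cdga model $A$ for $X$ comes with a quasi-isomorphism relating $A$ and $\Omega(X)$ (zig-zag if necessary), and I must promote this to a homotopy equivalence $\gamma_\bullet(A\hat{\tensor} L)\simeq \gamma_\bullet(\Omega(X)\hat{\tensor} L)$. The cleanest route is to show that a quasi-isomorphism $A\to A'$ of cdgas induces a weak equivalence $\gamma_\bullet(A\hat{\tensor} L)\to \gamma_\bullet(A'\hat{\tensor} L)$; given Proposition \ref{prop:fibration} and the tower-of-fibrations/$\varprojlim^1$ machinery from Section \ref{sec:gamma}, this reduces by the filtration on $L$ to the nilpotent quotients $L/F_rL$ and ultimately to the abelian subquotients, where $\gamma_\bullet$ is the Dold--Kan construction (Proposition \ref{prop:abelian}) and the statement becomes the elementary fact that a quasi-isomorphism of cochain complexes $A\tensor V\to A'\tensor V$ induces a homotopy equivalence of Dold--Kan constructions.

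I expect the last paragraph to be the main obstacle. The levelwise identification with $\MC_\bullet$ is formal once Lemma \ref{lemma:ce} is granted, but the invariance of $\gamma_\bullet(A\hat{\tensor} L)$ under quasi-isomorphisms in $A$ requires genuine homotopical input: one must check that completion $A\hat{\tensor}(-)$ and the inverse limit defining $\gamma_\bullet$ interact well with quasi-isomorphisms, which is where the $\varprojlim^1$ control and the five lemma (exactly as used in Theorem \ref{thm:main}) become essential. The naturality and finite-type hypotheses are what make the $\varprojlim^1$ terms behave, so I would be careful to track the profiniteness of $L$ throughout.
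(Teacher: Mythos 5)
Your proposal is correct and follows essentially the same route as the paper: Brown--Szczarba applied to $B=C^*(L)$, the levelwise identification $\Map_{cdga}(C^*(L),-)\cong \MC_\bullet(-\hat{\tensor}L)$ from Lemma \ref{lemma:ce}, and the homotopy equivalence $\gamma_\bullet \subseteq \MC_\bullet$. The only difference is that the paper invokes the quasi-isomorphism invariance in the second assertion of Theorem \ref{thm:bs} up front to pass from $\Omega(X)$ to $A$, so the reduction-to-the-abelian-case argument you flag as the ``main obstacle'' is already subsumed in that quoted assertion and transfers to $\gamma_\bullet$ along the natural equivalence $\gamma_\bullet(\gl)\subseteq\MC_\bullet(\gl)$; no additional $\varprojlim^1$ work is needed at this stage.
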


\begin{proof}
By the Sullivan-deRham localization theorem \cite[\S11.2]{BG} we may take the spatial realization $Y_\QQ = \langle C^*(L) \rangle$ as a $\QQ$-localization of $Y$. The Brown-Szczarba theorem applied to $B = C^*(L)$ yields a homotopy equivalence between $\Map(X,Y_\QQ)$ and $\Map_{cdga}(C^*(L),A)$. We may write $L$ as the inverse limit of the finite dimensional nilpotent quotients $L/F^rL$, where $F^rL = L_{\geq r-1}$. It follows that
$$\Map_{cdga}(C^*(L),A) = \varprojlim \Hom_{cdga}(C^*(L/F^rL),A\tensor \Omega_\bullet) \cong \varprojlim \MC_\bullet(A\tensor L/F^rL),$$
where we have used Proposition \ref{prop:spatial} in the last step (we do not assume that $A$ is bounded, so we must pass to the finite dimensional quotients $L/F^rL$ before we can apply the proposition). The latter simplicial set is isomorphic to the nerve of the complete $L_\infty$-algebra $A\htensor L = \varprojlim A\htensor (L/F^r L)$. By Proposition \ref{prop:mcgamma}, this is homotopy equivalent to $\gamma_\bullet(A\hat{\tensor} L)$.
\end{proof}

\begin{remark}
Note that we do not assume that $X$ is finite in Theorem \ref{thm:map tensor}. This is the reason we need to extend the definition of $\gamma_\bullet$ to complete $L_\infty$-algebras. However, if the models $A$ and $L$ can be chosen so that either $A$ or $L$ is bounded, then the completed tensor product $A\hat{\tensor} L$ is isomorphic to the ordinary tensor product $A\tensor L$ and it is degree-wise nilpotent. Furthermore, note that if $A$ is the dual of a dg coalgebra $C$, which happens for instance if $A$ is of finite type, then $A\hat{\tensor} L \cong \Hom(C,L)$.
\end{remark}

\subsection*{Proof of Theorem \ref{thm:bs}}
We now embark on a proof of Theorem \ref{thm:bs}. The theorem is reformulated in the language of nerves of $L_\infty$-algebras as Theorem \ref{thm:map} below.

\begin{proposition} \label{prop:mu}
Let $X$ be a simplicial set and $L$ a non-negatively graded complete $L_\infty$-algebra. There is a natural map
$$\mu\colon\MC(\Omega(X)\hat{\tensor} L)\rightarrow \Hom_{sSet}(X,\MC_\bullet(L))$$
which is an isomorphism if $X$ is finite or if each $L/F^rL$ is finite dimensional.
\end{proposition}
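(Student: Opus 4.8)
The plan is to construct the natural map $\mu$ explicitly and then check that it is a bijection under the stated hypotheses. First I would unwind what the two sides are. An element of $\MC(\Omega(X)\hat{\tensor} L)$ is a Maurer-Cartan element $\tau$ in the complete $L_\infty$-algebra $\Omega(X)\hat{\tensor} L$; recall $\Omega(X) = \Hom_{sSet}(X,\Omega_\bullet)$. On the other side, a map of simplicial sets $X\rightarrow \MC_\bullet(L)$ assigns to each $n$-simplex $x\in X_n$ an element $\mu(\tau)(x)\in \MC_\bullet(L)_n = \MC(\Omega_n\hat{\tensor} L)$, compatibly with faces and degeneracies. The construction of $\mu$ should be as follows: given $\tau\in \MC(\Omega(X)\hat{\tensor} L)$ and an $n$-simplex $x\colon \Delta[n]\rightarrow X$, pull $\tau$ back along $x$ to obtain an element of $\MC(\Omega(\Delta[n])\hat{\tensor} L)$, and then apply the map induced by the canonical cdga quasi-isomorphism $\Omega(\Delta[n])\rightarrow \Omega_n$ (evaluation/restriction to the simplex $\Delta[n]$, or equivalently the counit of the realization adjunction). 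The key point is that extension of scalars along a cdga map $A\rightarrow B$ sends $\MC(A\hat{\tensor} L)$ to $\MC(B\hat{\tensor} L)$, which is immediate from the definition of the brackets on $A\hat{\tensor} L$.

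Once $\mu$ is defined, I would verify naturality and simplicial compatibility, which is routine bookkeeping: faces and degeneracies on $X$ correspond to the cosimplicial structure on $\Omega_\bullet$, and everything is functorial in the simplicial operators because $\tau$ is pulled back along the maps $\Delta[n]\rightarrow X$. The substantive claim is that $\mu$ is a bijection when $X$ is finite or $L$ is profinite. For the bijectivity I would reduce to a statement about the adjunction between simplicial sets and cdgas. The right-hand side, $\Hom_{sSet}(X,\MC_\bullet(L))$, can be rewritten using Proposition \ref{prop:spatial}: when $L$ is profinite, $\MC_\bullet(L)\cong \langle C^*(L)\rangle = \Hom_{cdga}(C^*(L),\Omega_\bullet)$, so
$$\Hom_{sSet}(X,\MC_\bullet(L))\cong \Hom_{sSet}(X,\Hom_{cdga}(C^*(L),\Omega_\bullet)).$$
By the tensor-hom type adjunction defining $\Omega(X)$, the latter is naturally isomorphic to $\Hom_{cdga}(C^*(L),\Omega(X))$, which by Lemma \ref{lemma:ce} is exactly $\MC(\Omega(X)\hat{\tensor} L)$. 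Chasing through these identifications should recover $\mu$, proving it is an isomorphism in the profinite case.

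For the finite case (where $L$ need not be profinite but $X$ is finite), the argument via $C^*(L)$ is unavailable, so I would instead argue directly. When $X$ is a finite simplicial set, $\Omega(X)$ is obtained as a finite limit of copies of the $\Omega_n$, and both $\MC$ and $\Hom_{sSet}(X,-)$ convert this finite limit into the same limit on the two sides; more precisely, writing $X$ as a coequalizer of coproducts of representables $\Delta[n]$, one uses that $\Omega$ sends colimits of simplicial sets to limits of cdgas, that $\MC(-\hat{\tensor} L)$ sends finite limits of cdgas to limits of sets, and that $\Hom_{sSet}(X,-)$ sends the corresponding colimit of $X$ to a limit, matching the two sides simplex by simplex via the Yoneda identification $\MC(\Omega_n\hat{\tensor} L) = \MC_\bullet(L)_n$. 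The main obstacle I anticipate is the compatibility of completions with these (co)limits: $\hat{\tensor}$ involves an inverse limit over the filtration, so I must be careful that $\Omega(X)\hat{\tensor} L$ really computes the correct object and that the limit defining $\MC_\bullet$ and the limit defining $\hat{\tensor}$ may be interchanged. For $X$ finite the relevant colimit is finite and commutes with the inverse limit over $r$, and for $L$ profinite the reduction to $C^*(L)$ sidesteps completions entirely by passing to honest cdga homomorphisms; it is exactly to keep one of these two escape routes available that the hypothesis ``$X$ finite or $L$ profinite'' is imposed.
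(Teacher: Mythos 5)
Your construction of $\mu$ is the same as the paper's (pull $\tau$ back along each simplex $x\colon\Delta[n]\to X$ and use the Yoneda identification $\Omega(\Delta[n])\cong\Omega_n$ --- note this evaluation map is an honest isomorphism, not merely a quasi-isomorphism as you call it, and the bijectivity argument needs it to be one), and your treatment of the finite-$X$ case coincides with the paper's: both present $X$ as a finite colimit of representables and use that $\Omega(-)$, $\MC(-\hat{\tensor}L)$ and $\Hom_{sSet}(-,\MC_\bullet(L))$ convert this into matching finite limits, with the commutation of finite limits past the completion $\varprojlim_r$ being the point you rightly flag. Where you genuinely diverge is the profinite case: the paper stays inside the Maurer-Cartan formalism, reducing to the nilpotent quotients $L/F_rL$ and using that $-\tensor L'$ commutes with \emph{all} limits when $L'$ is finite, then passing to the inverse limit over $r$; you instead route through the realization adjunction, rewriting $\Hom_{sSet}(X,\MC_\bullet(L))\cong\Hom_{sSet}(X,\langle C^*(L)\rangle)\cong\Hom_{cdga}(C^*(L),\Omega(X))\cong\MC(\Omega(X)\hat{\tensor}L)$ via Lemma \ref{lemma:ce}. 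This is a clean shortcut, with two small caveats: you should cite Lemma \ref{lemma:ce} rather than Proposition \ref{prop:spatial}, since the latter is stated only for non-negatively graded finite-type $L$ while the proposition at hand allows arbitrary profinite $L$; and you do still owe the (routine) check that the composite of these identifications is the map $\mu$ you defined, so that the two cases produce one and the same natural transformation. The paper's approach has the advantage of being uniform --- one definition for nilpotent $L$, one colimit argument, one passage to inverse limits --- while yours buys a conceptual explanation of \emph{why} profiniteness suffices, namely that it is exactly the condition under which $\MC_\bullet(L)$ is a spatial realization.
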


\begin{proof}
We define the map for nilpotent $L$ first. Given $\tau\in \MC(\Omega(X)\tensor L)$, define a simplicial map $f\colon X\rightarrow \MC_\bullet(L)$ as follows. For an $n$-simplex $x\colon \Delta[n]\rightarrow X$, we let $f(x)\in \MC_n(L)$ be the image of $\tau$ under the map
$$x^*\colon \MC(\Omega(X)\tensor L)\rightarrow \MC(\Omega_n\tensor L).$$
It is straightforward to check that $f$ is a simplicial map, and we set $\mu(\tau) = f$. The map $\mu$ is evidently an isomorphism for $X=\Delta[n]$. Since $-\tensor L$ commutes with finite limits, the functor $\MC(\Omega(-)\tensor L)$ takes finite colimits to limits. The functor $\Hom_{sSet}(-,\MC_\bullet(L))$ preserves all limits, so it follows that $\mu$ is an isomorphism for finite $X$. On the other hand, if $L$ is finite dimensional then $-\tensor L$ commutes with all limits and in this case $\mu$ is an isomorphism for arbitrary $X$. Finally, for complete $L$ the map $\mu$ is defined as the map induced on inverse limits
$$\varprojlim \MC(\Omega(X)\tensor L/F^r L) \rightarrow \varprojlim \Hom_{sSet}(X,\MC_\bullet(L/F^rL)),$$
and by the above this is an isomorphism if $X$ is finite or if each $L/F^rL$ is finite dimensional.
\end{proof}

\begin{theorem} \label{thm:map}
Let $X$ be a simplicial set and $L$ a non-negatively graded complete $L_\infty$-algebra. There is a natural homotopy equivalence of Kan complexes
$$\varphi\colon\MC_\bullet(\Omega(X)\hat{\tensor} L)\rightarrow \Map(X,\MC_\bullet(L)).$$
Furthermore, the functor $\MC_\bullet(-\hat{\tensor} L)\colon \mathsf{CDGA} \rightarrow \mathsf{sSet}$ takes quasi-isomorphisms to homotopy equivalences between Kan complexes.
\end{theorem}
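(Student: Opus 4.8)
The plan is to build $\varphi$ from the comparison map $\mu$ of Proposition \ref{prop:mu} together with the Künneth (cross) map, and then to prove that it is a weak equivalence by collapsing the complete $L_\infty$-algebra $L$ to the abelian case through two nested towers. I would first define $\varphi$ on $n$-simplices as the composite
$$\MC\bigl((\Omega_n\tensor\Omega(X))\hat{\tensor} L\bigr)\longrightarrow\MC\bigl(\Omega(X\times\Delta[n])\hat{\tensor} L\bigr)\xrightarrow{\ \mu\ }\Hom_{sSet}(X\times\Delta[n],\MC_\bullet(L)),$$
where the first arrow is induced by extending scalars by $L$ along the cross product $\Omega(X)\tensor\Omega(\Delta[n])\to\Omega(X\times\Delta[n])$ and using $\Omega(\Delta[n])=\Omega_n$. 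The relation $A\hat{\tensor}(B\hat{\tensor} L)\cong(A\tensor B)\hat{\tensor} L$ identifies the source with $\MC_\bullet(\Omega(X)\hat{\tensor} L)_n$, and the target is $\Map(X,\MC_\bullet(L))_n$. That this is a simplicial map, natural in $X$ and $L$, is a bookkeeping exercise from the naturality of $\mu$ and of the cross product.

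The engine of the proof is that both sides turn the defining towers of $L$ into towers of Kan fibrations, compatibly with $\varphi$. A short exact sequence $0\to\gl'\to\gl\to\gl''\to0$ of complete $L_\infty$-algebras tensors up (exactly, since we are over a field and the induced filtrations are Mittag--Leffler) to a short exact sequence $0\to\Omega(X)\hat{\tensor}\gl'\to\Omega(X)\hat{\tensor}\gl\to\Omega(X)\hat{\tensor}\gl''\to0$, so Proposition \ref{prop:fibration} produces a Kan fibration sequence on the source side; on the target side the same sequence gives a fibration $\MC_\bullet(\gl')\to\MC_\bullet(\gl)\to\MC_\bullet(\gl'')$, which $\Map(X,-)$ preserves, and by naturality $\varphi$ is a map between these two fibration sequences. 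Applying this to the central extensions $0\to F_r\gl/F_{r+1}\gl\to\gl/F_{r+1}\gl\to\gl/F_r\gl\to0$ reduces, by a finite induction and the five lemma on the two long exact sequences of homotopy groups, the equivalence for nilpotent $L$ to the case of abelian $L$. The passage from the quotients $L/F_rL$ to $L=\varprojlim L/F_rL$ is handled by the Milnor $\limo$ sequence: both towers consist of fibrations with inverse limits the two sides for $L$, and $\varphi=\varprojlim\varphi_r$, so a levelwise equivalence gives an equivalence of limits. Running the identical reduction with a fixed quasi-isomorphism $A\to A'$ in place of $X$ proves the \emph{furthermore} assertion, the abelian base case there being simply that tensoring a quasi-isomorphism with a complex of $\QQ$-vector spaces is again a quasi-isomorphism, which Dold--Kan converts into a homotopy equivalence.

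The main obstacle is the abelian base case: for an abelian, non-negatively graded complete $L_\infty$-algebra $V$ (that is, a complete filtered chain complex in non-negative degrees) one must show $\varphi\colon\MC_\bullet(\Omega(X)\hat{\tensor} V)\to\Map(X,\MC_\bullet(V))$ is a weak equivalence for every $X$. Proposition \ref{prop:abelian} identifies the left-hand side with $\Gamma_\bullet\bigl((\Omega(X)\hat{\tensor} V)[1]\bigr)$ and the right-hand side with $\Map(X,\Gamma_\bullet(V[1]))$, under which $\varphi$ becomes the de Rham comparison map. Since $\Gamma_\bullet(V[1])$ is a simplicial $\QQ$-vector space, so is its mapping space, and the Dold--Kan adjunction together with exactness of products over a field yields $\pi_k\Map(X,\Gamma_\bullet(V[1]))\cong\prod_p\Hom\bigl(\HH_p(X;\QQ),\HH_{p+k-1}(V)\bigr)$ with no higher corrections. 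On the source one computes $\pi_k$ as $\HH_{k-1}(\Omega(X)\hat{\tensor} V)$ and matches it with the same product via Künneth and the fact that $\Omega(X)$ computes $\HH^*(X;\QQ)=\Hom(\HH_*(X;\QQ),\QQ)$.

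The delicate point, and the part I expect to cost the most, is exactly this comparison when $X$ is not of finite type and $V$ is merely complete: one must align the inverse limit defining $\hat{\tensor}$ and its $\limo$ term against the product over $p$ on the mapping-space side, and verify that $\varphi$ realizes the \emph{identity} comparison rather than some abstract isomorphism. Once the abelian case is secured, the tower reductions of the second paragraph upgrade it to arbitrary non-negatively graded complete $L$ and to all $X$ simultaneously, establishing both that $\varphi$ is a natural homotopy equivalence of Kan complexes and that $\MC_\bullet(-\hat{\tensor} L)$ sends quasi-isomorphisms to homotopy equivalences.
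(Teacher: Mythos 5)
Your proposal follows essentially the same route as the paper: define $\varphi$ from $\mu$ and the cross product $\Omega(\Delta[n])\tensor\Omega(X)\to\Omega(\Delta[n]\times X)$, reduce to the abelian case through the tower of central extensions and the five lemma, pass to the limit with the Milnor $\limo$ sequence, and treat the abelian case via Dold--Kan and products of rational Eilenberg--MacLane spaces, with the \emph{furthermore} clause obtained by the same reduction. The finiteness subtlety you flag in the abelian comparison (product of $\Hom$'s versus tensor with the dual when both $\HH_*(X;\QQ)$ and the subquotients of $L$ are infinite-dimensional) is genuine but is passed over in the paper with ``standard arguments''; it is harmless in the intended profinite applications, so your treatment is, if anything, slightly more careful.
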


\begin{remark}
There is a commutative diagram
$$\xymatrix{\MC_\bullet(\Omega(X)\hat{\tensor} L) \ar[r]^-\varphi \ar[d]^-\cong & \Map(X,\MC_\bullet(L)) \ar[d]^-\cong \\ \Map_{cdga}(C^*(L),\Omega(X)) \ar[r]^-{BS} & \Map(X,\langle C^*(L) \rangle).}$$
Therefore, Theorem \ref{thm:map} may be viewed as a reformulation of the Brown-Szczarba theorem.
\end{remark}

\begin{proof}
We define the map $\varphi$ for nilpotent $L$ first. On $n$-simplices, the map $\varphi_n$ is defined as the composite
\begin{align*}
\MC_n(\Omega(X)\tensor L)
& = \MC(\Omega(\Delta[n])\tensor \Omega(X) \tensor L) \\
& \stackrel{\pi}\rightarrow \MC(\Omega(\Delta[n]\times X)\tensor L) \\
& \stackrel{\mu}{\rightarrow} \Hom_{sSet}(\Delta[n]\times X,\MC_\bullet(L)) \\
& = \Map(X,\MC_\bullet(L))_n,
\end{align*}
where $\pi$ is induced by the natural morphism of cdgas $\Omega(\Delta[n])\tensor \Omega(X)\rightarrow \Omega(\Delta[n]\times X)$, and $\mu$ is the map described in Proposition \ref{prop:mu}. By naturality of all maps involved, $\varphi$ respects the simplicial structure. For complete $L$, we define $\varphi$ to be the induced map on inverse limits
$$\varprojlim \MC_\bullet(\Omega(X)\tensor L/F^rL)\rightarrow \varprojlim \Map(X,\MC_\bullet(L/F^rL)).$$

The proof that $\varphi$ is a weak equivalence is by reduction to the abelian case. For abelian $L$, the map $\varphi$ is equivalent to the map between Dold-Kan constructions
$$\Gamma_\bullet(\Omega(X)\tensor L[1])\rightarrow \Map(X,\Gamma_\bullet(L[1])).$$
But $\Gamma_\bullet(L[1])$ is a product of rational Eilenberg-MacLane spaces, and by standard arguments the homotopy groups of the right hand side are given by $\HH^*(X;\QQ)\tensor \HH_*(L)[1]$, and $\varphi$ is a weak equivalence in this case. Next, we note that both source and target of $\varphi$ take central extensions of $L_\infty$-algebras to principal fibrations, so we can induct on a composition series for $L$ to conclude that $\varphi$ is a weak equivalence for all nilpotent $L_\infty$-algebras $L$. For complete $L$, we get a levelwise weak equivalence of towers of fibrations
$$\MC_\bullet(\Omega(X)\tensor L/F^rL)\stackrel{\sim}{\rightarrow} \Map(X,\MC_\bullet(L/F^rL)).$$
Hence, we get a weak equivalence upon passing to inverse limits. This finishes the proof of the first statement. The proof that $\MC_\bullet(-\hat{\tensor} L)$ takes quasi-isomorphisms to weak equivalences is proved by a similar reduction to the abelian case.
\end{proof}

\section{Examples}

\subsection*{Finite rational cohomology or homotopy}
If $\HH^*(X;\QQ)$ or $\pi_*(Y)\tensor \QQ$ is finite dimensional, then for a fixed map $g\colon X\rightarrow Y$, composition with the $\QQ$-localization map $r\colon Y\rightarrow Y_\QQ$ induces a rational homotopy equivalence
$$\xymatrix{\Map(X,Y;g) \ar[r]^-{\sim_\QQ} & \Map(X,Y_\QQ;rg).}$$
If $X$ has finite dimensional rational cohomology, we can find a finite dimensional cdga model $A$ (see e.g.~\cite[Example 6, p.146]{FHT-RHT}). Likewise, if $\pi_*(Y)\tensor \QQ$ is finite dimensional, then we can find a finite dimensional nilpotent $L_\infty$-algebra model $L$ for $Y$. In either of these situations, $A\hat{\tensor} L = A\tensor L$, and given a Maurer-Cartan element $\tau\in \MC(A\tensor L)$ that represents the map $rg\colon X\rightarrow Y_\QQ$, the truncated and twisted $L_\infty$-algebra $(A\tensor L^\tau)_{\geq 0}$ is an $L_\infty$-algebra model for the component $\Map(X,Y;g)$. Equivalently, the Chevalley-Eilenberg construction $C^*((A\tensor L^\tau)_{\geq 0})$ is a (not necessarily minimal) Sullivan model for $\Map(X,Y;g)$. In particular, we have an isomorphism of graded Lie algebras
$$\pi_{*+1}(\Map(X,Y),g)\tensor \QQ \cong \HH_*(A\tensor L^\tau),\quad *\geq 0.$$
Here, $\pi_1\tensor \QQ$ is interpreted as the Malcev completion of the nilpotent group $\pi_1$ \cite[\S A3]{Quillen}, and $\HH_0(A\tensor L^\tau)$ is given the Campbell-Hausdorff group structure.

\subsection*{Inclusions of complex projective spaces.}
The rational homotopy of maps into projective spaces has been studied in \cite{MR}. The following example shows that the calculations are drastically simplified by using Theorem \ref{thm:main3}.

Consider the standard inclusion $i\colon \CP{n}\rightarrow \CP{m}$ where $m\geq n\geq 1$. As a cdga model for $\CP{n}$ we may choose the cohomology
$$A = \HH^*(\CP{n};\QQ) = \QQ[x]/(x^{n+1}),\quad |x| = 2.$$
A minimal $L_\infty$-algebra model for $\CP{m}$ is given by
$$L =  \pi_*(\Omega \CP{m})\tensor \QQ = \langle \alpha,\beta \rangle,\quad |\alpha| = 1, \quad |\beta| = 2m,$$
where the only non-vanishing bracket is
$$\frac{1}{(m+1)!}[\alpha^{\wedge m+1}] = \beta.$$
The twisting cochain $\tau = x\tensor \alpha \in A\tensor L$ represents the inclusion $i\colon \CP{n}\rightarrow \CP{m}$. Thus, an $L_\infty$-model for the component $\Map(\CP{n},\CP{m};i)$ is given by the finite dimensional $L_\infty$-algebra $\gl =  A\tensor L_{\geq 0}^\tau$, and the Chevalley-Eilenberg construction $C^*(\gl)$ is a Sullivan model. A basis for $\gl$ is given by
$$
\begin{array}{ccccc}
2m & 2m-2 & \cdots & 2m-2n & 1 \\
1\tensor \beta & x\tensor \beta & \cdots & x^n\tensor \beta & 1\tensor \alpha
\end{array}
$$
The $L_\infty$-algebra structure is described by
$$\frac{1}{r!}[(1\tensor \alpha)^{\wedge r}]_\tau = \binom{m+1}{r} x^{m+1-r}\tensor \beta.$$
Since $\beta$ is central in $L$, it follows that the elements $x^r \tensor \beta$ are central in $\gl$, so the only possible non-zero brackets are described by the above. Note that we get zero if $r\leq m-n$ since $x^{n+1} = 0$. In particular, the differential is zero if $m>n$, and in this case the Chevalley-Eilenberg construction $C^*(\gl)$ is a \emph{minimal} Sullivan model for $\Map(\CP{n},\CP{m};i)$. It has the following description:
$$\Lambda(z,w_{m-n},w_{m-n+1},\ldots,w_m),\quad dz = 0,\quad dw_r = z^{r+1},\quad |z| = 2,\quad |w_r| = 2r+1.$$
On the other hand, if $m=n$, then we have a non-zero differential $[1\tensor \alpha]_\tau = (n+1)x^n\tensor \beta$, and hence $\pi_{*}(\Map(\CP{n},\CP{n}),1_{\CP{n}})\tensor \QQ$ is concentrated in odd degrees (as predicted by Halperin's conjecture, see below) with precisely one basis element each in the degrees $3,5,\ldots,2n+1$. The minimal Sullivan model for $\Map(\CP{n},\CP{n};1_{\CP{n}})$ is therefore an exterior algebra on the dual of the rational homotopy with zero differential:
$$\Lambda(x_3,x_5,\ldots,x_{2n+1}),\quad dx_j = 0,\quad |x_j| = j.$$
In other words, $\Map(\CP{n},\CP{n};1_{\CP{n}})$ is rationally homotopy equivalent to the product $S^3\times S^5 \times \cdots \times S^{2n+1}$.

\subsection*{On the Halperin conjecture}
Let $X$ be an \emph{$F_0$-space}, i.e., a simply connected space with evenly graded rational cohomology such that both $\dim_\QQ \HH^*(X;\QQ)<\infty$ and $\dim_\QQ \pi_*(X)\tensor \QQ < \infty$. Then $X$ is formal and the cohomology algebra admits a presentation
$$\HH^*(X;\QQ) = \QQ[x_1,\ldots,x_n]/(f_1,\ldots,f_n),$$
where $x_1,\ldots,x_n$ are evenly graded generators and $f_1,\ldots,f_n$ is a regular sequence \cite{Halperin}. \emph{Halperin's conjecture} says that for such spaces $X$, the component $\aut_1(X)$ of the space of homotopy self-equivalences of $X$ that contains the identity map is rationally homotopy equivalent to a product of odd dimensional spheres. Equivalently, the rational homotopy groups $\pi_*(\aut X,1_X)\tensor \QQ$ are concentrated in odd degrees. Using Theorem \ref{thm:main3}, we get a new simpler proof of the following result.

\begin{theorem}(Meier \cite{M})
Let $X$ be an $F_0$-space. Then the Halperin conjecture holds for $X$ if and only if the cohomology algebra $\HH^*(X;\QQ)$ admits no derivations of negative degree.
\end{theorem}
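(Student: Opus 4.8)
The plan is to use the explicit $L_\infty$-algebra model for $\aut_1(X)$ provided by Theorem \ref{thm:main3} and translate the Halperin conjecture into a purely algebraic statement about the homology of a twisted, truncated convolution $L_\infty$-algebra. Since $X$ is an $F_0$-space, it is formal, so we may take $A = \HH^*(X;\QQ)$ as a cdga model (with zero differential). For the target I would take an $L_\infty$-model $L$ for $X$ itself. The self-maps homotopic to the identity correspond to a specific Maurer-Cartan element $\tau \in A \hat\tensor L$, and Theorem \ref{thm:main3} identifies $\pi_{*+1}(\aut X, 1)\tensor\QQ$ with $\HH_*((A\hat\tensor L)^\tau)$. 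The conjecture then becomes the statement that this graded vector space is concentrated in degrees corresponding to odd homotopy degrees, i.e., that $\HH_{\mathrm{even}} = 0$ in the relevant range.

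The key reduction is to relate the homology of the twisted convolution algebra to derivations of $\HH^*(X;\QQ)$. Because $X$ is an $F_0$-space, its cohomology is evenly graded and a complete intersection $\QQ[x_1,\ldots,x_n]/(f_1,\ldots,f_n)$ with $f_1,\ldots,f_n$ a regular sequence. First I would make the grading explicit: in $A \hat\tensor L$ the even-degree part of the homology should be governed by the module of derivations of the polynomial algebra modulo those coming from the regular sequence, while the odd-degree part is always present (coming from the spheres). The crucial point is that for a complete intersection, the Koszul-type resolution afforded by the regular sequence controls exactly which derivations survive. I expect that the even-degree homotopy $\HH_{2k}((A\hat\tensor L)^\tau)$ can be identified with the space of degree $-2k$ derivations of the graded algebra $\HH^*(X;\QQ)$, after accounting for the inner derivations built into the differential of the twisted algebra.

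With this identification in hand, the equivalence is essentially a matter of unwinding definitions. The Halperin conjecture holds precisely when $\pi_*(\aut X,1)\tensor\QQ$ is concentrated in odd degrees, which by the identification above is equivalent to the vanishing of all even-degree homology classes, which in turn is equivalent to the nonexistence of negative-degree derivations of $\HH^*(X;\QQ)$ (a nonzero derivation lowering cohomological degree corresponds to a nonzero even homotopy class). The two directions then follow: a negative-degree derivation produces a nontrivial even-degree homology class obstructing the conjecture, and conversely any even-degree homology class produces such a derivation.

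\textbf{The main obstacle} I anticipate is making precise the dictionary between homology classes of the twisted $L_\infty$-algebra $(A\hat\tensor L)^\tau$ and derivations of the cohomology algebra. The twisting by $\tau$ introduces a nontrivial differential on $A\hat\tensor L$ whose action must be computed explicitly, and one must verify that its homology in even degrees is exactly $\Der^{<0}(\HH^*(X;\QQ))$ and that the odd degrees contribute the sphere factors unconditionally. This requires carefully exploiting the regular-sequence (complete intersection) structure to control the relevant cycles and boundaries; the formality of $X$ is what makes the computation tractable, since it lets us work with the cohomology algebra directly rather than with a general minimal model.
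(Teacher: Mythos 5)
Your overall strategy coincides with the paper's: take $A=\HH^*(X;\QQ)$ by formality, take the $L_\infty$-model $L$ dual to the minimal Sullivan model, and identify $\pi_{*+1}(\aut X,1)\tensor\QQ$ with the homology of the twisted algebra. However, the step you yourself flag as the main obstacle --- identifying that homology with $\Der(\HH^*(X;\QQ))$ --- is exactly where the content of the proof lies, and your sketch of it contains a parity error and misses the two structural facts that make it work. First, the indexing: since $\pi_{n+1}\cong \HH_n$, concentration of $\pi_*(\aut X,1)\tensor\QQ$ in \emph{odd} degrees is the vanishing of $\HH_n$ for \emph{odd} $n>0$, not of $\HH_{\mathrm{even}}$; correspondingly a derivation of degree $-2k$ sits in $\HH_{2k-1}$, not $\HH_{2k}$. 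Because the whole theorem is a parity statement, this off-by-one shift cannot be waved away.

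Second, the identification itself does not go through a Koszul resolution or a quotient by ``inner derivations.'' Writing $L=L_{odd}\oplus L_{even}$ with $L_{odd}=\langle\alpha_i\rangle$ dual to the $x_i$ and $L_{even}=\langle\beta_j\rangle$ dual to the $y_j$, the key points are: (i) $L_{even}$ is \emph{central} in $L$, so $D^\pi$ vanishes on $A\tensor L_{even}$ and hence
$\HH_{odd}(A\tensor L,D^\pi)=\ker\bigl(D^\pi\colon A\tensor L_{odd}\rightarrow A\tensor L_{even}\bigr)$
is a kernel, with no boundaries to account for (so in particular nothing like inner derivations is quotiented out); and (ii) the explicit computation $D^\pi(1\tensor\alpha_i)=\sum_j \tfrac{\partial f_j}{\partial x_i}\tensor\beta_j$, so that by $A$-linearity the cycle condition on $\sum_i p_i\tensor\alpha_i$ is precisely $\sum_i p_i\,\partial f_j/\partial x_i=0$ for all $j$, i.e.\ the condition that $\sum_i p_i\,\partial/\partial x_i$ descends to a derivation of $\QQ[x_1,\ldots,x_n]/(f_1,\ldots,f_n)$. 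The regular-sequence hypothesis enters only through Halperin's finiteness theorem (guaranteeing the minimal model has the stated form with $dy_j=f_j$), not through controlling ``which derivations survive.'' Without (i) and (ii) your argument does not close; with them, the equivalence follows in a few lines as in the paper.
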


\begin{proof}
A minimal model for $X$ is given by
$$\QQ[x_1,\ldots,x_n]\tensor \Lambda(y_1,\ldots,y_n),\quad dx_i = 0, \quad dy_i = f_i.$$
Here $y_i$ is a generator of odd degree $|y_i| = |f_i|-1$. The dual $L_\infty$-algebra $L$ is given by
$$L = L_{odd} \oplus L_{even} = \langle \alpha_1,\ldots,\alpha_n\rangle \oplus \langle\beta_1,\ldots,\beta_n \rangle,$$
where $|\alpha_i| = |x_i|-1$ are odd  and $|\beta_i| = |y_i|-1$ even. We have an isomorphism of rational vector spaces for every $n\geq 0$,
$$\pi_{n+1}(\aut X,1_X)\tensor \QQ = \HH_n(A\tensor L,D^\pi),\quad D^\pi(\xi) = \sum_{\ell \geq 2} \frac{1}{\ell!} [\pi^{\wedge \ell},\xi],$$
where the Maurer-Cartan element $\pi$ is given by $\pi = \sum_i x_i\tensor \alpha_i + y_i\tensor \beta_i.$
Hence, the Halperin conjecture holds for $X$ if and only if
$$\HH_{n}(A\tensor L,D^\pi) = 0,\quad \mbox{for all odd $n>0$.}$$
Since $L_{even}$ is central in $L$, we have that $D^\pi(A\tensor L_{even}) = 0$. Thus,
$$\HH_{odd}(A\tensor L,D^\pi) = \ker (D^\pi \colon A\tensor L_{odd}\rightarrow A\tensor L_{even}).$$
The differential $D^\pi$ is $A$-linear, and a calculation yields
$$D^\pi(1\tensor \alpha_i) = \sum_{j=1}^n \frac{\partial f_j}{\partial x_i}\tensor \beta_j.$$
Therefore, an element $\sum_i p_i\tensor \alpha_i\in A\tensor L_{odd}$ belongs to the kernel of $D^\pi$ if and only if
$$\sum_{i=1}^n p_i \frac{\partial f_j}{\partial x_i} = 0,\quad 1\leq j\leq n.$$
But this is true if and only if $\sum_i p_i\frac{\partial}{\partial x_i}$ defines a derivation of $A$. Thus, we obtain an isomorphism
$$\ker(D^\pi\colon A\tensor L_{odd} \rightarrow A\tensor L_{even}) \cong Der A, \quad \sum_{i=1}^n p_i\tensor \alpha_i \mapsto \sum_{i=1}^n p_i\frac{\partial}{\partial x_i}.$$
This shows that $\HH_n(A\tensor L,D^\pi) = 0$ for all odd $n>0$ if and only if $A$ admits no negative (in cohomological grading) derivations.
\end{proof}

\subsection*{Homotopy automorphisms of formal and coformal spaces}
In \cite{KS}, the following characterization of spaces that are both formal and coformal was established.
\begin{theorem}[Berglund {\cite{KS}}] \label{thm:koszul space}
The following are equivalent for a connected nilpotent space $X$ of finite $\QQ$-type:
\begin{enumerate}
\item $X$ is both formal and coformal.
\item $X$ is formal and $\HH^*(X;\QQ)$ is a Koszul algebra.
\item $X$ is coformal and $\pi_*(\Omega X)\tensor \QQ$ is a Koszul Lie algebra.
\end{enumerate}
In this situation, homotopy is Koszul dual to cohomology in the sense that
$$\pi_*(\Omega X)\tensor \QQ \cong \HH^*(X;\QQ)^{!_{\Lie}}.$$
\end{theorem}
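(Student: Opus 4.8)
The plan is to translate the topological conditions into structural properties of the minimal Sullivan and Quillen models of $X$ and then to recognize those properties as the algebraic content of operadic Koszul duality between the operads governing commutative algebras and Lie algebras. Since $X$ is connected, nilpotent, and of finite $\QQ$-type, it has a minimal Sullivan model $(\Lambda V,d)$ with $V\cong(\pi_*(X)\tensor\QQ)^\vee$ and a minimal Quillen (Lie) model $(\LL(W),\partial)$ whose homology is $L:=\pi_*(\Omega X)\tensor\QQ$ and whose generators $W\cong s^{-1}\tilde{\HH}_*(X;\QQ)$ are dual to reduced homology. These two models are exchanged by the Chevalley--Eilenberg/Harrison (bar--cobar) adjunction between cdgas and dg Lie algebras, which on minimal models realizes exactly the passage between cohomology and homotopy. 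The first step is to record the two classical quadraticity criteria: $X$ is coformal if and only if the differential of its minimal Sullivan model is purely quadratic, $d(V)\subseteq\Lambda^2V$ (equivalently, $C^*(L)$ is a Sullivan model and $A:=\HH^*(X;\QQ)\cong\HH^*(C^*(L))$); and dually $X$ is formal if and only if the differential of its minimal Quillen model is purely quadratic, $\partial(W)\subseteq\LL^2(W)$ (equivalently, the Quillen model of $A$ already computes $L$).

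With this in place the scheme becomes: condition (1) is the self-dual requirement that \emph{both} minimal models be quadratic, and I would establish the two implications (1)$\Rightarrow$(2) and (2)$\Rightarrow$(1), obtaining (3) afterwards by the symmetric argument with the roles of $A$ and $L$ interchanged. For (1)$\Rightarrow$(2), assuming $X$ formal I would use the Eilenberg--Moore and Milnor--Moore theorems to identify the universal enveloping algebra $UL\cong\HH_*(\Omega X;\QQ)\cong\Tor^A(\QQ,\QQ)$; the homological resolution grading carried by $\Tor$ then induces a bigrading on $UL$, hence on $L$, and coformality forces $L$ to be generated in the lowest homological degree with purely quadratic relations, which is precisely the statement that $\Tor^A_{i,j}(\QQ,\QQ)$ is concentrated on the diagonal, i.e.\ that $A$ is Koszul. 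For the converse (2)$\Rightarrow$(1) I would run Koszul duality in the opposite direction: for a Koszul commutative algebra $A$ the operadic Koszul dual $A^{!_{\Lie}}$ is a genuine graded Lie algebra, its Chevalley--Eilenberg cochain algebra $C^*(A^{!_{\Lie}})$ is a \emph{quadratic} cdga with cohomology $A$, and diagonal purity of $\Tor^A$ guarantees that this model is minimal and that all higher Massey and higher Whitehead products vanish; hence $X$ is coformal and $A^{!_{\Lie}}\cong L$, which simultaneously yields the final displayed isomorphism $\pi_*(\Omega X)\tensor\QQ\cong\HH^*(X;\QQ)^{!_{\Lie}}$.

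The cleanest way to organize all of this is to introduce a single auxiliary bigrading, on the bigraded Halperin--Stasheff model, and to phrase formality, coformality, and Koszulness as three manifestations of one statement, namely that the minimal differential respects that bigrading. I expect the main obstacle to be exactly the equivalence between the vanishing of the higher-order parts of the minimal differentials and the diagonal concentration of $\Tor^A(\QQ,\QQ)$: the direction ``diagonal $\Rightarrow$ quadratic model'' is a direct application of the Koszul complex as an explicit minimal resolution, but the direction ``quadratic model $\Rightarrow$ diagonal'' requires controlling the bar spectral sequence and ruling out the survival of off-diagonal $\Tor$ classes, which is where a careful filtration argument is needed. A secondary technical point is to ensure that the merely nilpotent and finite-type hypotheses (rather than simple connectivity) do not obstruct the Eilenberg--Moore and Milnor--Moore identifications; this I would handle by working throughout with the completed, profinite models and passing to the associated towers, exactly in the spirit of the completed constructions used earlier in the paper.
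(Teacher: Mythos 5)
First, a point of reference: the paper does not prove this statement at all. Theorem \ref{thm:koszul space} is imported verbatim from the author's separate paper \cite{KS} and is used here as a black box, so there is no internal proof to compare yours against line by line. That said, your outline does follow the broad strategy of the cited source: the quadraticity criteria (coformal iff the minimal Sullivan differential is purely quadratic, formal iff the minimal Quillen differential is purely quadratic), the bar--cobar duality between commutative and Lie algebras, and the identification of the $\Tor$-bigrading as the carrier of the Koszul condition are all the right ingredients, and your implication $(2)\Rightarrow(1)$ via the Koszul complex and the quadratic Sullivan model $C^*(A^{!_{\Lie}})$ is sound modulo standard Koszul duality.

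Nevertheless the proposal has a genuine gap, and it sits exactly where you place ``the main obstacle.'' In $(1)\Rightarrow(2)$ you assert that ``coformality forces $L$ to be generated in the lowest homological degree with purely quadratic relations, which is precisely the statement that $\Tor^A_{i,j}(\QQ,\QQ)$ is concentrated on the diagonal,'' but no argument is given for either half of that sentence. Coformality says that $C^*(L)$ is a Sullivan model for $X$, i.e.\ $\HH^*(C^*(L))\cong A$; it does not by itself equip $L$ with a weight grading compatible with the resolution grading on $\Tor^A(\QQ,\QQ)$, nor does it visibly kill off-diagonal classes --- one must show that the wordlength grading on the quadratic Quillen model of $A$ matches the homological grading of $\Tor$ and that the combination of the two quadraticity hypotheses forces diagonal concentration. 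This filtration/spectral-sequence argument is the entire content of the theorem, and your text names it as an expected difficulty rather than supplying it. A second, independent problem is the reliance on Milnor--Moore and Eilenberg--Moore in the form $UL\cong\HH_*(\Omega X;\QQ)\cong\Tor^{A}(\QQ,\QQ)$: both require simple connectivity, whereas the theorem is stated for nilpotent $X$, where $\pi_1$ contributes a Malcev Lie algebra in degree zero and the relevant models are complete; ``passing to towers'' is the right instinct but is not carried out, and it is not a cosmetic fix since the degree-zero part changes the statements of both classical theorems. Until these two points are filled in, what you have is a correct road map rather than a proof.
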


That the cohomology $\HH^*(X;\QQ)$ is a Koszul algebra means that it is generated by elements $x_i$ modulo quadratic relations $\sum_i c_{ij}x_ix_j = 0$ such that $\Tor_{s,t}^A(\QQ,\QQ) = 0$ for $s\ne t$, where the extra grading on $\Tor$ is induced by wordlength in the generators $x_i$. That $\pi_*(\Omega X)\tensor \QQ \cong \HH^*(X;\QQ)^{!_{\Lie}}$ means that, as a graded Lie algebra, $\pi_*(\Omega X)\tensor \QQ$ is generated by classes $\alpha_i$ dual to $x_i$ modulo the \emph{orthogonal relations}: a relation
$$\sum_{i,j} \lambda_{ij} [\alpha_i,\alpha_j] = 0$$
holds if and only if
$$\sum_{i,j} (-1)^{|x_i||\alpha_j|} c_{ij}\lambda_{ij} = 0$$
whenever the coefficients $c_{ij}$ represent a relation among the generators $x_i$.

The component of the mapping space $\Map(X,X)$ that contains a fixed homotopy self-equivalence is equal to the same component of $\aut(X)$, since any map homotopic to a homotopy equivalence is itself a homotopy equivalence. Moreover, $\pi_1(\aut(X),1_X)$ is an abelian group as $\aut(X)$ is a monoid. By combining Theorem \ref{thm:main3}\footnote{Alternatively, one could use the main theorem of \cite{BFM-tran} in this case.} and Theorem \ref{thm:koszul space}, we obtain the following theorem.

\begin{theorem}
Let $X$ be formal and coformal nilpotent space such that either $\dim_\QQ \HH^*(X;\QQ)<\infty$ or $\dim_\QQ \pi_*(X)\tensor \QQ<\infty$. Then there is a finite basis $x_1,\ldots,x_n$ for the indecomposables of the cohomology algebra $\HH^*(X;\QQ)$ and a dual basis $\alpha_1,\ldots,\alpha_n$ for the indecomposables of the homotopy Lie algebra $\pi_*(\Omega X)\tensor \QQ$. Setting $\kappa = x_1\tensor \alpha_1 + \ldots + x_n\tensor \alpha_n\in \HH^*(X;\QQ)\tensor \pi_*(\Omega X)$, the derivation $[\kappa,-]$ is a differential, and there are isomorphisms
$$\pi_{k+1}(\aut(X),1_X) \tensor \QQ \cong \HH_k(\HH^*(X;\QQ)\tensor \pi_*(\Omega X),[\kappa,-]),\quad k\geq 0.$$
\end{theorem}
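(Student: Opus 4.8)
The plan is to produce the two models guaranteed by formality and coformality, feed them into Theorem \ref{thm:main3}, and then recognize the resulting twisted $L_\infty$-structure as $(\HH^*(X;\QQ)\tensor\pi_*(\Omega X),[\kappa,-])$. Since $X$ is formal, $A=\HH^*(X;\QQ)$ with zero differential is a cdga model for $X$; since $X$ is coformal, the homotopy Lie algebra $L=\pi_*(\Omega X)\tensor\QQ$, viewed as an $L_\infty$-algebra with zero differential and only a binary bracket, is a profinite $L_\infty$-model for $X$ (it is the $L_\infty$-algebra attached to the minimal Sullivan model of the finite $\QQ$-type space $X$ via Proposition \ref{prop:sullivan algebra}). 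Under either finiteness hypothesis the remark following Theorem \ref{thm:map tensor} gives $A\hat{\tensor} L=A\tensor L$, and because $L$ carries no differential and no higher brackets while $A$ has zero differential, $A\tensor L$ is a graded Lie algebra with zero differential.

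The heart of the matter is to show that the Maurer--Cartan element representing the rationalized identity map is exactly $\kappa=\sum_i x_i\tensor\alpha_i\in(A\tensor L)_{-1}$. First I would verify that $\kappa$ is a Maurer--Cartan element. Since the differential and all brackets of arity $\geq 3$ vanish, this reduces to the single equation $[\kappa,\kappa]=0$. Expanding with the extension-of-scalars bracket gives $[\kappa,\kappa]=\sum_{i,j}(-1)^{|\alpha_i||x_j|}x_ix_j\tensor[\alpha_i,\alpha_j]$, and I would argue that this vanishes precisely by the orthogonality of relations supplied by Theorem \ref{thm:koszul space}: every relation $\sum_{i,j}c_{ij}x_ix_j=0$ in $\HH^*(X;\QQ)$ is matched by the dual relation $\sum_{i,j}(-1)^{|x_i||\alpha_j|}c_{ij}[\alpha_i,\alpha_j]=0$ in $\pi_*(\Omega X)\tensor\QQ$, so the two quotient maps onto the degree-two parts of $A$ and of $L$ have orthogonal kernels and the canonical element dies. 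I expect this to be the main obstacle: it is the one point where Koszulness enters essentially and non-formally, and the bookkeeping of Koszul signs must be done carefully. Granting $[\kappa,\kappa]=0$, the graded Jacobi identity yields $2[\kappa,[\kappa,-]]=[[\kappa,\kappa],-]=0$, so the derivation $[\kappa,-]$ is a differential, as asserted.

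Next I would pin down the homotopy class of $\kappa$. Via Lemma \ref{lemma:ce}, $\kappa$ corresponds to a cdga morphism $\kappa^\sharp\colon C^*(L)\to A$ from the Sullivan model $C^*(L)$ of $X$ to the cohomology model $A$. Koszulness of $A$ makes $\kappa^\sharp$ the Koszul quasi-isomorphism, and on generators it is the chosen duality pairing between the $\alpha_i$ and the $x_i$; hence it induces the identity on the indecomposables of $\HH^*(X;\QQ)$. For a formal space a self-map inducing the identity on cohomology is rationally homotopic to the identity, so $\kappa$ represents the class of $1_X$ in $\pi_0\gamma_\bullet(A\tensor L)=[X,X_\QQ]$.

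Finally I would assemble the conclusion. With $\tau=\kappa$, the twisted $L_\infty$-algebra $A\tensor L^\tau$ is the graded Lie algebra $A\tensor L$ equipped with the differential $[\kappa,-]$: in $\delta^\tau=\sum_{k}\frac1{k!}[\tau^{\wedge k},-]$ only the $k=1$ summand survives for arity reasons, and the bracket is unchanged. Applying Theorem \ref{thm:main3} with $Y=X$ to the component of the rationalized identity then gives
$$\pi_{k+1}(\Map(X,X_\QQ),r)\cong\HH_k(A\tensor L,[\kappa,-]),\quad k\geq 0.$$
Since any map homotopic to $1_X$ is a homotopy equivalence, this component coincides with the identity component of $\aut(X)$ after rationalization, and the rational equivalence of mapping-space components recorded at the start of this section identifies the left-hand side with $\pi_{k+1}(\aut(X),1_X)\tensor\QQ$; for $k=0$ one uses that $\pi_1(\aut(X),1_X)$ is abelian, so its Malcev completion is the ordinary rationalization and the Campbell--Hausdorff structure of $\HH_0$ reduces to the vector-space structure. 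This yields the stated isomorphisms and completes the plan.
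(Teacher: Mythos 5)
Your route is the one the paper intends: the paper offers no written argument beyond ``combine Theorem \ref{thm:main3} and Theorem \ref{thm:koszul space}'', and your proposal fills in exactly that combination --- formality gives the model $A=\HH^*(X;\QQ)$, coformality gives $L=\pi_*(\Omega X)\tensor\QQ$, the Maurer--Cartan equation for $\kappa$ is the orthogonality of the Koszul relations, and Theorem \ref{thm:main3} converts the twisted complex into the homotopy groups of the relevant component. One step as written is false, though it is repairable. You justify that $\kappa$ represents the rationalized identity by the claim that for a formal space a self-map inducing the identity on cohomology is rationally homotopic to the identity. This fails in general: for $X=S^3\vee S^3\vee S^7$ (formal, coformal, $\dim_\QQ\HH^*<\infty$), the self-map which is the identity on the two $3$-cells and sends $\iota_7$ to $\iota_7+[[\iota_3,\iota_3'],\iota_3]$ induces the identity on $\HH^*(X;\QQ)$ but acts nontrivially on $\pi_7(X)\tensor\QQ$, so it is not rationally homotopic to $1_X$. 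What you actually need --- and what Koszulness does give, since $\kappa^\sharp\colon C^*(L)\to A$ is a quasi-isomorphism (acyclicity of the Koszul complex, i.e.\ the content of Theorem \ref{thm:koszul space}) --- is only that $\kappa$ represents $\phi\circ r$ for \emph{some} homotopy self-equivalence $\phi$ of $X_\QQ$. Post-composition with $\phi^{-1}$ is then a homotopy equivalence from the component of $\kappa$ to the component of $r$ (equivalently, all components of $\aut$ are homotopy equivalent to the identity component), so the identification of $\pi_{k+1}(\aut(X),1_X)\tensor\QQ$ with $\HH_k(A\tensor L,[\kappa,-])$ goes through unchanged. With that substitution your argument is complete; the remaining points ($A\hat{\tensor}L=A\tensor L$ under either finiteness hypothesis, $[\kappa,\kappa]=0$ from the orthogonal relations, the collapse of the twisted structure to the single differential $[\kappa,-]$, and the $k=0$ case via abelianness of $\pi_1(\aut(X),1_X)$) are handled correctly.
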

In \cite{BM}, we use this result to calculate the rational homotopy groups of the space of homotopy self-equivalences of highly connected manifolds. Further applications are treated in the PhD thesis of Casper Guldberg \cite{Guldberg}.

\subsection*{Acknowledgments}
The author is grateful to Greg Lupton for discussions about the rational homotopy theory of mapping spaces, and to Ryszard Nest for discussions about Deligne groupoids. We also thank the anonymous referee for helpful comments and suggestions. This work was supported by the Danish National Research Foundation (DNRF) through the Centre for Symmetry and Deformation.

\end{document}